\newtheorem{thm}{Theorem}[section]
\newtheorem{prop}[thm]{Proposition}
\newtheorem{cor}[thm]{Corollary}
\newtheorem{lemma}[thm]{Lemma}
\theoremstyle{definition}
\newtheorem{definition}[thm]{Definition}
\theoremstyle{definition}
\newtheorem{exam}[thm]{Example}
\newtheorem{rem}[thm]{Remark}
\theoremstyle{definition}
\newtheorem*{prooff}{\it{Proof of Theorem 4.4}}
\newcommand{\vs}{\mathcal{V}}
\newcommand{\orbit}{\mathcal{O}}
\newcommand{\reals}{\mathbb{R}}
\newcommand{\sections}{\Gamma(E)}
\newcommand{\reg}{\rho^{\text{reg}}_G}
\newcommand{\regd}{\rho^{\text{reg}}_{D_n}}
\numberwithin{equation}{section}
\begin{document}


\vspace{1cm}

\title[Factorization of the Stability Polynomials of Ring Systems]{Factorization of the Stability Polynomials of Ring Systems}
\author{Eduardo S. G. Leandro\\}
\address{Universidade Federal de Pernambuco\\
	  Depto de Matem\'atica \\
	  Av. Jornalista An\'{\i}bal Fernandes s/n, Recife, PE \\
	  50740-560, Brasil\\
	  Phone: 55 81 2126-7650, Fax: 55 81 2126-8410
	  }
\email{eduardo@dmat.ufpe.br}
\thanks{I wish to acknowledge the support of Cristina Stoica and Manuele Santoprete, as well as their participation in the early discussions of the research project which led to this article. I also wish to acknowledge the hospitality of the Department of Mathematics at Wilfrid Laurier University, where this research was initiated.}
\subjclass[2010]{70F10, 37N05, 70Fxx, 37Cxx.}
\keywords{Celestial Mechanics, Group Representation Theory, Symmetric Relative Equilibria}

\begin{abstract}
 Let $D_n$ be the dihedral group with $2n$ elements, and suppose $n$ is greater than one. We call ring system a finite $D_n$-symmetric set of points in $\reals^2$. Ring systems have been used as models for planets surrounded by rings, and may be seen as relative equilibria of the $N$-body or the $N$-vortex problem. As a first significant step towards linear stability analysis, we study the factorization of the stability polynomial of an arbitrary ring system by systematically exploiting the ring's symmetry through representation theory of finite groups. Our results generalize contributions by J. C. Maxwell from mid-XIX century until contemporary authors such as J. Palmore and R. Moeckel, among others. 
\end{abstract}
\maketitle

\section{Introduction} \label{sec0}


 In his celebrated, award-winning essay published in 1859, the mathematical physicist J. C. Maxwell analyzed several models for the gravitational system formed by Saturn and its rings. Maxwell concluded that the only linearly stable model consisted of a solution of the planar $N$-body problem with a sufficiently massive body at the center of a uniformly rotating regular $n$-gon whose vertices are occupied by bodies of small equal masses. 
A crucial step in Maxwell's study was a linear change of coordinates made in the vector space formed by the planar displacements of the $n$-gon. Viewed in terms of complex variables, Maxwell's change of coordinates amounts essentially to a discrete Fourier transform, see for instance the lectures by Poincar\'e~\cite{p03}, and remarks in~\cite{sv91}. As a consequence of such a change of coordinates, the characteristic polynomial associated with the linearization of the equations of motion, also known as the secular or stability polynomial, is factored into polynomials of degrees two and four in a variable $\lambda^2$. Notice the degrees of the factors do not depend on $N=n+1$, the total number of bodies. Analogous factorizations were obtained by Palmore~\cite{p76}, in his study of degenerate relative equilibria in the Newtonian $N$-body problem, and later on, by Moeckel~\cite{m95}, who used the change of basis in~\cite{p76} to decompose the spaces of planar displacements of the regular $n$-gon and of the centered regular $n$-gon into invariant subspaces under the stability matrix. It seems evident that the factorizations obtained by Maxwell, Palmore and Moeckel are due to the symmetry of the centered regular $n$-gon, however a systematic discussion of this relationship seems yet absent from the literature. One of the chief purposes of the present article is to contribute to such systematization. We notice that the question of linear stability of relative equilibria with respect to normal displacements has been answered in the affirmative in~\cite{m95}.
 
  A finite symmetric subset of $\reals^2$ can only have a cyclic group or dihedral group as its symmetry group~\cite{a91}. The symmetry group of the set formed by the vertices of a regular $n$-gon and the point at its barycenter is the dihedral group $D_n$. One may ask the reciprocal question: what is the form of a general subset $X$ of $\reals^2$ possessing dihedral symmetry? It turns out a complete and relatively simple answer can be given for all $n \geq 2$: $X$ consists of $a=0$ or 1 point at a position $O$ surrounded by $b$ regular $n$-gons and $c$ semiregular $2n$-gons having $O$ as their common center.  Due to the geometric appearance of this structure, we refer to $X$ as a ring system of type $(a,b,c)$. The dihedral symmetry of $X$ allows us to define a canonical representation $\sigma^E$ (i.e., an action through isomorphisms) of the group $D_n$ on the space of planar displacements of $X$. We verify that $\sigma^E$ can be expressed as the direct sum of copies of a familiar representation, namely the regular representation of $D_n$, one copy for each regular $n$-gon and two copies for each semiregular $2n$-gon in $X$. If $X$ contains its barycenter $O$, an additional copy of the standard representation $\sigma$ of $D_n$ in the plane must be added to form $\sigma^E$.
  
 We consider a ring system which uniformly rotates about its barycenter as a relative equilibrium of the $N$-body problem under an arbitrary homogeneous potential, or of the  $N$-vortex problem. In the particular context of the Newtonian $N$-body problem, relative equilibria are the only known explicit periodic solutions and have been the object of research since Euler and Lagrange. It is known that some ring systems consisting of concentric regular $n$-gons with or without a body at the barycenter are relative equilibria provided the masses at the vertices of individual regular $n$-gons are equal and the radii of the corresponding circumscribing circles are suitably chosen. However, the question of whether a general ring system is a relative equilibrium for some choice of masses and radii appears to be open. Since our main concern are the possible factorizations of stability polynomials, we will treat all ring systems as relative equilibria under the assumption of equal masses (vorticities) at the vertices of each individual regular $n$-gon and each individual semiregular $2n$-gon.
 
 Group representation theory provides standard tools to exploit symmetry in the analysis of physical models~\cite{fs94,mw93,s94}. In the present context, the symmetry of a ring system $X$ implies that the hessian of the potential function commutes with the matrices associated with the canonical representation $\sigma^E$; we refer to this fact by saying that the hessian is $\sigma^E$-equivariant, or simply equivariant. Representation theory provides projection operators associated with the irreducible representations of the group $D_n$, and we can apply such operators to decompose the space of planar displacements into subspaces invariant by the hessian. The dimensions of these invariant subspaces depend only on the type $(a,b,c)$ of $X$. The matrix in the linearized equations of motion at the configuration corresponding to $X$ can be expressed as the sum of an equivariant matrix $A$ (the hessian multiplied by another equivariant matrix, namely the inverse of the mass matrix), the identity matrix $I$ and an antisymmetric matrix $J$. It turns out that $J$ interchanges the $A$-invariant subspaces produced by representation theory. This effect is precisely the one noticed in~\cite{m95} for ring systems of types $(0,1,0)$ and $(1,1,0)$. For a general ring system, using our understanding of the relationship between $J$ and the $A$-invariant subspaces, one can form subspaces simultaneously invariant by $A$ and $J$, and thus block diagonalize any linear combinations of $A$, $I$ and $J$. 

 We state our main conclusions and some other relevant contents of the article. The stability polynomial of a ring system of type $(a,b,c)$ and symmetry group $D_n$, $n>2$, factors as one ($n$ odd) or two ($n$ even)  polynomials of degree $2(b+2c)$, $n/2-3/2$ ($n$ odd) or $n/2-2$ ($n$ even) polynomials of degree $4(b+2c)$, and one polynomial of degree $2(a+2b+4c)$. The latter polynomial possesses a degree two factor due to the translational symmetry of the potential functions of the $N$-body problems; moreover, the rotational symmetry and homogeneity of the potential functions produce a factor of degree two of one of the polynomials of degree $2(b+2c)$. The (non-unique) bases of the space of planar displacements of $X$ which lead to the above factorizations are explicitly constructed in a coordinate-free manner by making use of the definitions and operational properties of the projection and other useful operators from representation theory, as well as $J$. The displacements in each such basis can  be illustrated through diagrams, and some of them are depicted in the figures of the latter sections of the paper. The suitable bases can be chosen orthogonal with respect to the inner product defined by the mass parameters and, under a normalization condition, such bases are also symplectic with respect to a symplectic form defined by $J$ and the masses. In addition, these bases can be modified in order to include the displacements associated with the symmetries and homogeneity of the potential functions; such displacements are eigenvectors of the matrix $A$ of the previous paragraph, and are pairwise transposed by $J$, thus leading to the latter factorizations mentioned above.  
Finally, we identified a few cases where full factorization is achievable, namely when $N=4$ and $n=2$. Among the associated relative equilibria are the rhombuses formed by two pairs of equal masses. All these results hold for the $N$-vortex problem with masses replaced by vorticities.

   Besides the conclusions listed above, our approach circumvents, in the symmetric setting, some lengthy matrix calculations to which one is led when investigating the linear stability of systems with an arbitrary number of particles. On the other hand, although we provide explicit relations between the entries of the block diagonal matrices leading to the factorization of the stability polynomials, important issues such as determination of the factors and the examination of the nature of their respective roots are left for subsequent works.

\section{Relative Equilibria of the Planar $N$-Body Problem and $N$-Vortex Problem} \label{sec1}

 In this section we deduce the expression for the stability polynomial of a relative equilibrium of the $N$-body and of the $N$-vortex problems. For the purpose of defining relative equilibria, it suffices to set up both problems in the Euclidean plane.

 
 The planar $N$-body problem consists of studying the dynamics of $N$ particles interacting according to a homogeneous force law:
\[ \ddot{q}_i=\sum_{j \neq i} m_j ||q_i-q_j||^{2\gamma}(q_j-q_i),\ \ q_i \in \reals^2,\]
$m_i \in \reals \setminus \{0\}$, $i=1,\cdots,N$, and $\gamma \in \reals \setminus \{-1\}$. When $\gamma=-\frac{3}{2}$ and $m_i>0$ for all $i$, we have the planar Newtonian $N$-body problem and the $m_i$ are called masses. If $\gamma=-1$, we have a system of first-order differential equations
\[ \dot{q}_i=-\mathcal{K}\sum_{j \neq i} m_j ||q_i-q_j||^{-2}(q_j-q_i),\]
with $q_i,m_i$ as above, and 
\begin{equation}\label{rotperp}
\mathcal{K}=\left[\begin{array}{cc}
																								0&-1\\
																								1&0
																								\end{array}
																					\right].
\end{equation}
This is the Helmholtz $N$-vortex problem, and the parameters $m_i$ are known as vorticities or circulations. In the sequel we formulate the $N$-body problem and the $N$-vortex problem as Hamiltonian systems, define relative equilibria and determine their stability polynomials.
																					
 Firstly, for each $\gamma\in \reals \setminus \{ -1\}$, we define the potential energy by
\[ U_{\gamma}(q)=\frac{1}{2\gamma+2} \sum_{i <j} m_i m_j ||q_i-q_j||^{2\gamma+2}.\]
Let $p_1,\cdots,p_N \in (\reals^2)^* \simeq \reals^2$ be the respective conjugate momenta of $q_1,\cdots,q_N \in \reals^2$. The Hamiltonian of the planar $N$-body problem is
\[ H_{\gamma}(q,p)=\frac{1}{2} \sum_{i=1}^N \frac{||p_i||^2}{m_i}+U_{\gamma}(q), \ \ q=(q_1,\cdots,q_N),\ \ p=(p_1,\cdots,p_N).\]
We endow the phase space $\reals^{2N} \times \reals^{2N}$ with the standard symplectic structure $\mathbb{J}=\left[\begin{array}{cc}
																								0&I_{2N}\\
																								-I_{2N}&0
																								\end{array}
																					\right],$ where $I_k$ from now on represents the identity matrix of size $k\times k$. If $z=(q,p)$, the planar $N$-body problem can be written as
\begin{equation*} \label{nbpa}
 \dot{z}=\mathbb{J}\nabla H_{\gamma}(z).
\end{equation*}

 Next let us consider the $N$-vortex problem. The Hamiltonian is 
\[ H_{-1}(q)=-\sum_{i < j} m_i m_j \ln ||q_i-q_j||, \qquad q \in \reals^{2N}.\]
We endow the phase space $\reals^{2N}$ with the symplectic structure $\mathbb{K}=J M^{-1}$, where $J$ and $M$ are the block-diagonal matrices:
\begin{equation} \label{jem}
 J=\left[\begin{array}{ccc}
					 \mathcal{K} & & \\
					 & \ddots &\\
					 & & \mathcal{K}
					 \end{array}
			\right] \quad \text{and} \quad M=\left[\begin{array}{ccc}
					 m_1 I_2 & & \\
					 & \ddots &\\
					 & & m_N I_2
					 \end{array}
			\right],
\end{equation}
with $\mathcal{K}$ given in~\eqref{rotperp}. The $N$-vortex problem in Hamiltonian form is
\begin{equation*} \label{nvp}
 \dot{q}= \mathbb{K}\nabla H_{-1} (q).
\end{equation*}

 In order to define relative equilibria, we change to a reference frame in uniform rotation around the origin. Let $\omega \in \reals \setminus \{0\}$ and let $R(\omega t)$ denote the rotation around the origin by an angle $\omega t$. The new coordinates are given by
\begin{equation} \label{rotcoords}
 x_i=R(\omega t)q_i, \ \ y_i=R(\omega t)p_i, \ \ i=1,\cdots, N.
\end{equation}
It is clear that this is a (time-dependent) symplectic change of coordinates $z \mapsto \zeta(t,z)$ in $\reals^{2N}\times \reals^{2N}$. The new Hamiltonian is $\mathcal{H}_{\gamma}(\zeta)=H_{\gamma}(z(\zeta))+\mathcal{R}(t,\zeta)$, where, for $\gamma=-1$, the remainder $\mathcal{R}$ satisfies the equation:
\[ \frac{\partial \zeta}{\partial t}=\mathbb{J} \nabla \mathcal{R}(t,\zeta).\]
From equations~\eqref{rotcoords}, and the fact that $R'(\omega t)R(-\omega t)=\mathcal{K}$, it follows that
\[ \mathcal{R}(t,\zeta)=\omega\sum_{i=1}^N x_i \mathcal{K} y_i=\omega \varkappa^T J \nu,\]
where we view $\varkappa=(x_1,\cdots,x_N)$ and $\nu=(y_1,\cdots, y_N)$ as column matrices and $J$ is as in~\eqref{jem}. Thus the Hamiltonian of the planar $N$-body problem in rotating coordinates $\zeta=(\varkappa,\nu)$ is
\begin{equation*} \label{hamilrot}
\mathcal{H}_{\gamma}(\varkappa,\nu)=\frac{1}{2}\nu^T M^{-1} \nu+\omega \varkappa^TJ\nu+U_{\gamma}(\varkappa), \ \ \gamma \neq -1,
\end{equation*}
with $M$ as in \eqref{jem}. By a similar procedure, now in $\reals^{2N}$, we obtain the Hamiltonian for the $N$-vortex problem in rotating coordinates:
\begin{equation*}\label{hamilrotv}
\mathcal{H}_{-1}(\varkappa)=\frac{\omega}{2} \varkappa^T M \varkappa+H_{-1}(\varkappa).
\end{equation*}
We are now ready to define relative equilibrium.

\begin{definition} \label{releq} A \emph{relative equilibrium} of the planar $N$-body (resp. $N$-vortex) problem is an equilibrium solution of the Hamiltonian system defined by $\mathcal{H}_{\gamma}$ (resp. $\mathcal{H}_{-1}$).
\end{definition}
It follows that a relative equilibrium of the $N$-body (resp. $N$-vortex) problem is a solution of $\nabla \mathcal{H}_{\gamma}=0$, $\gamma \neq -1$ (resp. $\nabla \mathcal{H}_{-1}=0)$. Thus a relative equilibrium must satisfy the equations:
\begin{equation*} \label{releq}
\omega^2 M \varkappa=-\nabla U_{\gamma}(\varkappa), \ \  \text{for} \ \gamma \neq -1, \quad \text{or} \quad \omega M \varkappa=-\nabla H_{-1}(\varkappa).
\end{equation*}

\subsection{Linearization and the Stability Polynomials} 

 We linearize the Hamiltonian systems of differential equations
\[ \dot{\zeta}=\mathbb{J}\nabla \mathcal{H}_{\gamma}(\zeta),\ \ \gamma \neq -1,\quad \text{and} \quad \dot{\varkappa}=\mathbb{K}\nabla \mathcal{H}_{-1}(\varkappa) \]
at a relative equilibrium. In the case $\gamma \neq -1$, we compute the $4N \times 4N$ matrix
\[ L_{\gamma}(\varkappa)=D(\mathbb{J}\nabla \mathcal{H}_{\gamma})(\varkappa, \nu)=\left[\begin{array}{cc}
																						-\omega J & M^{-1}\\
																						-D\nabla U_{\gamma}(\varkappa) & -\omega J
																						\end{array}
																			\right].
\]
The \emph{stability polynomial} of the relative equilibrium $\varkappa$ is, by definition:
\[ P_{\gamma}(\lambda)=\det(\lambda I_{4N}-L_{\gamma}(\varkappa))\]
Through simple manipulations, the above determinant can be written as
\[ P_{\gamma}(\lambda)=\det\left[\begin{array}{cc}
                           O_{2N} & -M^{-1}\\
                           D\nabla U_{\gamma}(\varkappa)+M(\lambda I_{2N}+\omega J)^2 & O_{2N}
                           \end{array}
                     \right],
\]
therefore
\begin{equation} 
P_{\gamma}(\lambda)=\det(M^{-1}D\nabla U_{\gamma}(\varkappa)+(\lambda^2-\omega^2) I_{2N}+2 \lambda \omega J). \label{secpol1}
\end{equation}
For the $N$-vortex problem, we have
\begin{align*}
 L_{-1}(\varkappa)&= D\mathbb{K}\nabla \mathcal{H}_{-1}(\varkappa) \\
          &= JM^{-1}D(\omega M\varkappa+\nabla H_{-1}(\varkappa))\\
          &=J(\omega I_{2N}+M^{-1}D\nabla H_{-1}(\varkappa)),
\end{align*}
hence 
\begin{equation} \label{secpol2}
P_{-1}(\lambda)=\det(M^{-1}D\nabla H_{-1}(\varkappa)+\omega I_{2N}+\lambda J).
\end{equation}

 We call the roots of $P_{\gamma}, P_{-1}$ the \emph{eigenvalues of} $\varkappa$. It is a basic fact that the linear stability of a relative equilibrium $\varkappa$ is determined by its eigenvalues and the existence of a basis formed by the eigenvectors of $L_{\gamma}(\varkappa)$. The analysis of the eigenvalues solely determines the spectral stability of $\varkappa$.
 
 Our goal in the present paper is to provide factorizations for $P_{\gamma}, P_{-1}$ when the configuration $\varkappa$ possesses symmetries. Our main tool is the representation theory of finite groups, specifically of the dihedral group $D_n$. We will not address the question of whether $\varkappa$ is or is not a relative equilibrium, however the literature on the $N$-body and $N$-vortex problems contains many related existence results. 
 
 
\section{Finite symmetric sets and their Displacements} \label{sec2}

 We briefly review some basic concepts and terminology. Let $\mathcal{M}$ be a metric space, and let $X$ be a subset of $\mathcal{M}$. A \emph{symmetry} of $X$ is an isometry of $\mathcal{M}$ which maps $X$ to itself. The symmetries of $X$ form a group $G$ under composition. We call $X$ a \emph{symmetric} (or $G$-\emph{symmetric}) \emph{set} if the group $G$ is nontrivial.

We will consider finite $G$-symmetric sets $X$ with $G$ finite. For instance, in the Euclidean plane, $G$ is finite if $X$ is finite and has at least two points.

 In order to describe the displacements of the points of a set $X$, we follow the approach in~\cite{s94} and consider a family of real vector spaces $\{E_x/x \in X\}$. The disjoint union of the elements of this family, $E=\cup_{x \in X} E_x$, is called a \emph{vector bundle} over $X$. 
The space $E_x$ is called the \emph{fiber} of $E$ over $x$, and a \emph{section} of $E$ is a function $\delta: X \longrightarrow E$ such that $\delta(x) \in E_x$. We endow the set of sections of $E$ with the structure of a real vector space in a natural way, and denote this space by $\sections$.
 
\begin{definition} \label{displacements}
The \emph{space of displacements} of $X$ in $E$ is the vector space of sections $\sections$. The elements of $\sections$ are called \emph{displacements} of $X$ in $E$.
\end{definition}

 Suppose a nontrivial group $G$ acts on $X$ (not necessarily through symmetries of $X$). A vector bundle $E$ over $X$ is called \emph{$G$-homogeneous} if each $g \in G$ acts on the fibers of $E$ as an isomorphism from $E_x$ to $E_{g\cdot x}$, where $\cdot$ denotes the action of $G$ on $X$. 
 
 A \emph{(linear) representation} of a group $G$ on a vector space $\vs$ is a homomorphism $\varrho$ between $G$ and the automorphism group $GL(\vs)$ of $\vs$. In the context of $G$-homogeneous vector bundles, we have the following definition.
 
\begin{definition} \label{canonical}
The \emph{canonical representation} of the group $G$ on the space $\sections$ is defined by
\[ [\varrho_E(g)(\delta)](x)=g \cdot \delta(g^{-1}\cdot x), \quad \forall g \in G, \ \forall \delta \in \sections,\ \forall x \in X,\]
where the dots indicate the actions of $G$ on $E$ and on $X$.
\end{definition}

\noindent The verification that $\varrho_E$ is indeed a representation is straightforward.
 
\begin{exam} \label{example1}
  Let $X \subset \reals^2$ be the set formed by the vertices of an equilateral triangle. The symmetry group of $X$ is the dihedral group $D_3=\{e,r,r^2,s,rs, r^2s\}$ considered as a group of rigid motions of the plane, where $r$ is a rotation about the center of the triangle by $\frac{2 \pi}{3}$, and $s$ is a reflection through a fixed median of the triangle. Let $E_{x}=T_{x}\reals^2$, for each $x \in X$, and write $E=T\reals^2\big|_{X}$, so $E$ is the restriction of the tangent bundle of $\reals^2$ to $X$. By viewing the fibers of $E$ as copies of $\reals^2$, we define the action $D_3$ on $E$ so that each $g \in D_3$ is a linear map which sends $v \in E_x$ to $g v \in E_{g x}$. $E$ is a $D_3$-homogeneous vector bundle over $X$, and $\varrho_E(g)(\delta)$ corresponds to rigidly moving the triangle $X$ \emph{together} with the displacement $\delta \in \sections$ according to $g$. 
\end{exam}

 In the next section we provide a broad generalization of the concepts featured in example~\ref{example1}.

 
 
\section{Finite $D_n$-symmetric subsets of $\reals^2$ and the canonical representation on the space of their planar displacements} \label{sec3}

 We call the \emph{standard representation} of $D_n$ the representation $\sigma:D_n \longrightarrow GL(\reals^2)$ defined by
\[ \sigma(r)=\left[\begin{array}{cc}
\cos 2\pi/n & -\sin 2 \pi/n \\
\sin 2 \pi/n & \cos 2 \pi/n
\end{array}
\right],  \quad \text{and} \quad \sigma(s) =\left[\begin{array}{cc}
-1 &0\\
0 & 1
\end{array}
\right].
\]
For a $D_n$-symmetric set $X \subset \reals^2$ (under the action given by $\sigma$), and for $E=T\reals^2\big|_X$, the corresponding canonical representation of $D_n$ on $\sections$  will be denoted by $\sigma^E$. The space $\sections$ is called the \emph{space of planar displacements of} $X$. 

 We describe the possible finite $D_n$-symmetric subsets of $\reals^2$. For each point $x \in X$, let $\mathcal{O}_x=\{g x\,/\,g\in D_n\}$ be the orbit of $x$ under $\sigma$~\footnote{The juxtaposition of $g \in D_n$ and $v \in \reals^2$ means $\sigma(g)(v)$.}.
 
\begin{prop} \label{rings}
For $n \geq 2$, a finite $D_n$-symmetric subset $X$ of $\reals^2$ with at least two points is the disjoint union of $a$ unit sets, $b$ regular $n$-gons and $c$ semiregular $2n$-gons, where $a=0,1$ and $b+c>0$. We call such $X$ a \emph{ring system of type} (a,b,c).
\end{prop}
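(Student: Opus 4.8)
The plan is to exhibit $X$ as a disjoint union of $D_n$-orbits and to classify the orbits of the $\sigma$-action of $D_n$ on $\reals^2$. Since $D_n$ acts on the finite set $X$ by symmetries, $X$ is partitioned into finitely many orbits $\orbit_{x_1},\dots,\orbit_{x_k}$; so it suffices to show that every orbit is either a single point at the origin $O$, a regular $n$-gon, or a semiregular $2n$-gon, and that the first type occurs at most once. Then $X$ has the asserted form with $a\in\{0,1\}$, and $b+c>0$ will follow from the hypothesis $|X|\geq 2$.

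The classification rests on orbit--stabilizer together with one elementary fact: a nontrivial rotation of $\reals^2$ fixes only $O$. Hence for $x\neq O$ the stabilizer $(D_n)_x$ meets the rotation subgroup $\langle r\rangle$ trivially; since $\langle r\rangle$ has index $2$ in $D_n$, the composite $(D_n)_x\hookrightarrow D_n\twoheadrightarrow D_n/\langle r\rangle\cong\mathbb{Z}/2$ is injective, forcing $(D_n)_x$ to be either $\{e\}$ or $\{e,\tau\}$ with $\tau$ a reflection. I would then split into three cases. If $x=O$, the orbit is $\{O\}$, a one-point set, and there is clearly at most one such orbit. If $x\neq O$ and $(D_n)_x=\{e,\tau\}$, then $x$ lies on the mirror of $\tau$, the orbit has $2n/2=n$ points, and since $r$ has order $n$ and fixes no point other than $O$, the rotations carry $x$ to $n$ distinct points equally spaced by $2\pi/n$ on the circle of radius $\|x\|$; that is, the orbit is a regular $n$-gon (a pair of antipodal points when $n=2$), and the reflections merely permute these points. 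If $x\neq O$ and $(D_n)_x=\{e\}$, the orbit has $2n$ points, namely the regular $n$-gon $P_1=\langle r\rangle x$ together with its mirror image $P_2=sP_1$, both inscribed in the circle of radius $\|x\|$; here $P_1\cap P_2=\emptyset$, for a common point would produce a reflection in $(D_n)_x$. Writing $\theta$ and $\phi$ for the angular coordinates of $x$ and $sx$ (so $\theta\not\equiv\phi\pmod{2\pi/n}$, again by triviality of the stabilizer), the orbit consists of the $2n$ points at angles $\theta+2\pi j/n$ and $\phi+2\pi j/n$, whose successive angular gaps alternate between two values summing to $2\pi/n$; this is exactly a semiregular $2n$-gon (a generic rectangle when $n=2$, and the regular $2n$-gon arising as the degenerate case of equal gaps, which is possible only when $n$ is odd).

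Assembling the cases, $X$ is the disjoint union of $a\in\{0,1\}$ one-point sets, some number $b$ of regular $n$-gons and some number $c$ of semiregular $2n$-gons, and distinct orbits are automatically disjoint. Finally, every orbit other than $\{O\}$ has at least $n\geq 2$ points, so if $b=c=0$ then $X\subseteq\{O\}$, contradicting $|X|\geq 2$; hence $b+c>0$. I expect the only real subtlety to be the bookkeeping of the $2n$-point case -- verifying that the two alternating angular gaps are genuinely what one means by a \emph{semiregular} $2n$-gon, and recording that the regular $2n$-gon is admitted as a degenerate instance for $n$ odd while for $n$ even it splits as two regular $n$-gons -- since everything else is a direct application of orbit--stabilizer.
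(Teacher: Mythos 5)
Your proof is correct and follows essentially the same route as the paper: decompose $X$ into $D_n$-orbits and classify each orbit according to whether the point is the common fixed point $O$, lies on a reflection axis (regular $n$-gon), or lies off all axes (semiregular $2n$-gon); your orbit--stabilizer phrasing is just a restatement of the paper's case split on membership in $\bigcup_{j} r^{j/2}\ell$. One parenthetical slip: the degenerate case of equal angular gaps (a regular $2n$-gon occurring as a single free orbit) is possible for every $n\geq 2$, not only for $n$ odd --- e.g.\ for $n=2$ the square with vertices at angles $\pm\pi/4,\pm 3\pi/4$ is a single $D_2$-orbit with trivial stabilizers --- but this side remark does not affect the validity of the classification.
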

\begin{proof} Recall $X$ is the disjoint union of the orbits of its points. We show there are three possible types of orbits. 

 Let $|X|$ be the cardinality of $X$. The point $O=\frac{1}{|X|}\sum_{x \in X} x$ is the barycenter of $X$. If $O \in X$, it is clear that $\mathcal{O}_O=\{O\}$. In particular, if $\ell$ is the line of $\reals^2$ fixed by the reflection $s\in D_n$, then $\ell$ contains $O$; indeed all reflection lines $r^{j/2} \ell$ must also go through $O$.  Besides, all rotations $r^j$ in $D_n$ are centered at $O$. Thus $O$ is the only point fixed by the action of $D_n$ on $X$. Let $a$ be 1 if $O \in X$ and 0 if $O \notin X$.
 
 Let $x \in X \setminus \{O\}$. We have two cases:
\[ \text{(1)} \ \ x \in \bigcup_{j=0}^{n-1} r^{j/2}\ell, \ \ \qquad \ \ \text{(2)}\ \ x \notin \bigcup_{j=0}^{n-1} r^{j/2}\ell.\]
In case (1), $\mathcal{O}_{x}$ is a regular $n$-gon centered at $O$. If $\mathcal{O}_x$ and $\mathcal{O}_{x'}$ are distinct regular $n$-gons in $X$, then $\mathcal{O}_x$ and $\mathcal{O}_{x'}$ are either homothetic or rotated relatively to one another by $\frac{\pi}{n}$. In case (2), $\mathcal{O}_x$ is a semiregular $2n$-gon. Two such semiregular $2n$-gons have alternating sides parallel and homothetic by the same factor.
\end{proof}

\begin{rem} \label{ring}
Proposition~\ref{rings} says that, for $n \geq 2$, $D_n$-symmetric sets are contained in the union of concentric circles of radii $\geq 0$, which motivates the terminology  \emph{ring system} applied to such sets.
\end{rem}

\begin{rem} \label{d1}
 A $D_1$-symmetric subset of $\reals^2$ consists of an arbitrary number of points on a line $\ell$ and an even number of points in $\reals^2 \setminus \ell$ pairwise equidistant to $\ell$.
\end{rem}

 Next we determine the structure of $\sigma^E$.
 
\begin{thm} \label{standardE}
 Let $X$ be a finite $D_n$-symmetric subset of $\reals^2$ and $\sigma$ be the standard representation of $D_n$ in $\reals^2$. Let $E=T\reals^2\big|_X$ and let $\sigma^E$ be the canonical representation of $D_n$ in the space of planar displacements of $X$, $\sections$. If $a, b$ and $c$ are as in proposition~\ref{rings}, then 
\[ \sigma^E \simeq a \sigma \oplus(b+2c)\regd.\]
\end{thm}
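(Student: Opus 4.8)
\noindent\emph{Proof proposal.}\quad The plan is to decompose the space of planar displacements orbit by orbit and to recognize each orbit's contribution as an induced representation of $D_n$. Since every fiber of $E=T\reals^2\big|_X$ is a copy of $\reals^2$, a displacement is just a map $X\to\reals^2$, and Definition~\ref{canonical} reads $(\sigma^E(g)\delta)(x)=\sigma(g)\,\delta(g^{-1}x)$. Identifying $\sections$ with $\mathrm{Maps}(X,\reals)\otimes_{\reals}\reals^2$, one checks at once that this action is the tensor product $\pi_X\otimes\sigma$, where $\pi_X$ denotes the real permutation representation of $D_n$ on $X$. Writing $X$ as the disjoint union of its $D_n$-orbits $\orbit_1,\dots,\orbit_k$ gives $\pi_X\cong\bigoplus_i\pi_{\orbit_i}$ as $D_n$-modules, hence $\sigma^E\cong\bigoplus_i(\pi_{\orbit_i}\otimes\sigma)$, so it suffices to treat a single orbit.

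For a fixed orbit $\orbit$, pick a base point $x_0\in\orbit$ with stabilizer $H=H_{x_0}\le D_n$, so that $\orbit\cong D_n/H$ and $\pi_{\orbit}\cong\mathrm{Ind}_H^{D_n}(\mathbf 1)$, the permutation representation on the cosets. By the projection formula, $(\mathrm{Ind}_H^{D_n}\mathbf 1)\otimes\sigma\cong\mathrm{Ind}_H^{D_n}(\mathrm{Res}_H\sigma)$, so the contribution of $\orbit$ to $\sigma^E$ equals $\mathrm{Ind}_H^{D_n}(\mathrm{Res}_H\sigma)$, and the whole computation is reduced to identifying the restriction of the standard representation to the point stabilizer $H$. (Equivalently, one can bypass the abstract machinery and exhibit an explicit basis of the sections supported on $\orbit$, obtained by ``parallel-transporting'' a fixed basis of $E_{x_0}$ along coset representatives, and read off the matrices of $\sigma^E$ directly.)

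I would then invoke Proposition~\ref{rings}, which lists the three possible orbit types. If $O\in X$, the orbit $\{O\}$ has $H=D_n$ and contributes $\mathrm{Ind}_{D_n}^{D_n}(\sigma)=\sigma$, giving the summand $a\sigma$. For a regular $n$-gon orbit, a base point lies on a reflection axis, and by Proposition~\ref{rings} the orbit has $n$ points, so $H=\langle s'\rangle\cong\mathbb Z/2$ is generated by the reflection $s'$ through that axis; since $\sigma$ carries every reflection of $D_n$ to a genuine reflection of $\reals^2$, with eigenvalues $+1$ and $-1$, we get $\mathrm{Res}_H\sigma\cong\mathbf 1\oplus\mathrm{sgn}\cong\reals[H]$, whence the contribution is $\mathrm{Ind}_H^{D_n}(\reals[H])\cong\reals[D_n]\cong\regd$ (by transitivity of induction, or by Frobenius reciprocity; the answer is the same regardless of which conjugacy class $s'$ lies in when $n$ is even). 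For a semiregular $2n$-gon orbit the base point has trivial stabilizer, so $\mathrm{Res}_H\sigma\cong\mathbf 1^{\oplus2}$ and the contribution is $\mathrm{Ind}_{\{e\}}^{D_n}(\mathbf 1^{\oplus2})\cong\reals[D_n]^{\oplus2}\cong 2\regd$. Summing over the $a$ orbits of the first type, the $b$ of the second and the $c$ of the third yields $\sigma^E\cong a\sigma\oplus b\,\regd\oplus 2c\,\regd=a\sigma\oplus(b+2c)\regd$; the dimensions match since a regular $n$-gon orbit contributes $2n=\dim\regd$ and a semiregular $2n$-gon orbit contributes $4n=2\dim\regd$.

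The main points needing care are bookkeeping ones. First, one must pin down the identification $\sections\cong\mathrm{Maps}(X,\reals)\otimes\reals^2$ and the induced-representation description of a single orbit carefully enough that the inverse $g^{-1}$ in Definition~\ref{canonical} lands on the correct side, so that one recovers the $D_n$-module structure and not merely the dimension. Second, one must read off $\mathrm{Res}_H\sigma$ correctly in the $n$-gon case: the crucial input is exactly that $\sigma$ sends reflections to reflections, which forces $\mathrm{Res}_H\sigma$ to be the regular representation of the order-two group $H$ and hence produces $\regd$ rather than some other $2n$-dimensional module. Everything else — additivity and transitivity of induction, the projection formula, and $\mathrm{Ind}_{\{e\}}^{G}(\mathbf 1)\cong\reals[G]$ — is standard representation theory and applies verbatim over $\reals$.
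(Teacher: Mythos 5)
Your proof is correct, but it takes a genuinely different route from the paper. The paper argues orbit by orbit in a completely explicit, constructive way: for a regular $n$-gon it chooses a displacement $\epsilon$ supported at a single vertex $x$ (a vector in the direction $\overrightarrow{xx_1}$, zero elsewhere) and checks that its $D_n$-orbit $\{\sigma^E(g)(\epsilon)\}_{g\in D_n}$ is a basis of $\Gamma\bigl(T\reals^2\big|_{\orbit_x}\bigr)$ on which $\sigma^E$ acts by left translation, i.e.\ by $\regd$; for a semiregular $2n$-gon it does the same with two such displacements. You instead identify $\sigma^E$ with $\pi_X\otimes\sigma$, reduce to a single orbit, and apply the projection formula $(\mathrm{Ind}_H^{D_n}\mathbf 1)\otimes\sigma\cong\mathrm{Ind}_H^{D_n}(\mathrm{Res}_H\sigma)$ together with the computation of the point stabilizers ($D_n$, $\mathbb Z/2$ generated by a reflection, or trivial) and of $\mathrm{Res}_H\sigma$. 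Your key observation in the $n$-gon case --- that $\sigma$ sends reflections to genuine reflections, so $\mathrm{Res}_H\sigma\cong\reals[H]$ and induction gives $\reals[D_n]$ --- is exactly the structural reason the paper's explicit orbit $\{\epsilon_g\}$ turns out to be free (the vector $\overrightarrow{xx_1}$ is not fixed by the stabilizing reflection, so $\epsilon_e$ and $\epsilon_s$ are independent at $x$). What each approach buys: yours is shorter and makes the mechanism transparent, at the cost of invoking the projection formula and transitivity of induction; the paper's produces the explicit intertwining basis of displacements, which prefigures the concrete displacements $\delta_x$, $\delta_{i,x}$ used throughout the later sections to compute the projections and block-diagonal forms. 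Both are complete; the only bookkeeping point you flag (the placement of $g^{-1}$ in the identification $\sigma^E\cong\pi_X\otimes\sigma$) does check out.
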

Before proving the above theorem, we explain the notation used in its statement. $\regd$ is the \emph{regular representation} of $D_n$; in general, the (left) regular representation $\reg$ of a group $G$ is the representation of $G$ in a (complex, real) vector space $\vs$ which possesses a basis $\{e_g\,/\, g\in G\}$ such that
\[ \reg(h)(e_g)=e_{hg}, \ \ \forall g,h \in G.\]
The direct sum with multiplicity on the righ-hand side of $\simeq$ means that we have a representation which, for each $g \in G$, is the direct sum of $a$ copies of the linear operator $\sigma(g)$ and $b+2c$ copies of $\regd(g)$. Finally the symbol $\simeq$ means that there exists an isomorphism $\Phi$ such that $\Phi\circ \sigma^E(g)= [a \sigma(g) \oplus(b+2c)\regd(g)]\circ \Phi$, for all $g \in D_n$.

\begin{prooff}
 Let $\orbit_1,\cdots,\orbit_l$ be the orbits of the action of $D_n$ on $X$. From the $D_n$-invariant decomposition $X=\cup_{i=1}^l \orbit_i$, we deduce the $D_n$-invariant decomposition of $E=T\reals^2\big|_X$, namely
\[ E=\bigcup_{i=1}^l T\reals^2\big|_{\orbit_i},\]
hence we obtain the $\sigma^E$-invariant decomposition of $\sections$, 
\[ \sections=\bigoplus_{i=1}^l \Gamma\left(T\reals^2\big|_{\orbit_i}\right),\]
and it suffices to look at the restrictions of $\sigma^E$ to each individual $\Gamma(T \reals^2\big|_{\orbit_i})$. 

 If the barycenter $O$ of $X$ is in $X$, its orbit $\orbit_O$ is just $\{O\}$ and its tangent plane is a copy of $\reals^2$ at $O$. From the definition of $\sigma^E$, the restriction of $\sigma^E$ to $\Gamma\left(T\reals^2\big|_{\orbit_O}\right)\cong T_O\reals^2 \cong \reals^2$ is just $\sigma$.
 
 
 If $X$ contains a regular $n$-gon, pick a vertex $x$ and index the vertices according to $x_j=r^{j}x$, $j=1,\cdots,n$. Let $s$ be the reflection about the line determined by $O$ and $x$, and choose a nonzero vector $v_e$ in $T_{x}\reals^2$ with the same direction and orientation as the vector $\overrightarrow{xx_1} \in \reals^2$. Define the planar displacement $\epsilon$ of the $n$-gon $\orbit_x$ as $v_e$ at $x$ and the zero vector at $x_j$, $j\neq n$. Let $\epsilon_g=\sigma^E(g)(\epsilon)$. The set $\{\epsilon_g\,/\,g \in D_n\}$ is a basis of $\Gamma(T \reals^2\big|_{\orbit_x})$ and, with respect to this basis, $\sigma^E$ is given by $\regd$. Figure~\ref{f-1} illustrates the case $n=5$.
 
 If $X$ contains a semiregular $2n$-gon, we apply a similar procedure. Pick two neighboring vertices $x,x'$ and let $x_j=r^{j}x$, $x'_j=r^{j}x'$, $j=1,\cdots,n$. Take $s$ as the reflection about the perpendicular bisector of the segment $\overline{xx'}$, and select $v_e \in T_{x}\reals^2$ with the same direction as, and opposite orientation to, $\overrightarrow{x'x}$, and $v_e' \in T_{x}\reals^2$ with the same direction and orientation as $\overrightarrow{x'_{n-1}x}$. Define displacements $\epsilon, \epsilon'$ as in the previous paragraph, using the vectors $v_e$ and $v_e'$, respectively. Form the sets $\{\sigma^E(g)(\epsilon)\,/\, g \in D_n\}$ and $\{\sigma^E(g)(\epsilon')\,/\, g \in D_n\}$. The union of these sets is a basis of $\Gamma(T \reals^2\big|_{\orbit_x})$ with respect to which $\sigma^E$ and $\regd \oplus \regd$ have the same matrix representations. \hfill $\square$
\end{prooff}

\begin{figure}[h]
\begin{center}
\scalebox{.3}{\includegraphics{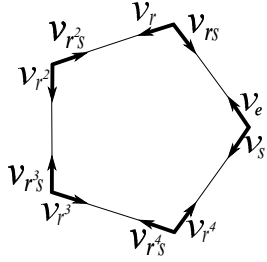}}
\caption{The vectors $v_g=\epsilon_g(x_j)$, $g\in D_5$, $j=1,\cdots,5$, leading to the construction of the isomorphism $\sigma^E \simeq \rho^{\text{reg}}_{D_5}$ for the regular pentagon.}
\label{f-1}
\end{center}
\end{figure}

 We conclude this section with the definition of a representation of $D_n$ on $\reals^{2N}$ which is isomorphic to $\sigma^E$ in a natural way. 
 
 Let $\varkappa=(x_1,\cdots,x_N) \in \reals^{2N}$. We say that $\varkappa$ is a $D_n$\emph{-symmetric configuration} if the corresponding set $X=\{x_1,\cdots,x_N\}\subset \reals^2$ is $D_n$-symmetric. The group $D_n$ acts by permutations of the $x_j \in X$, so it makes sense to write $g \cdot x_j=x_{g(j)}$. Using this notation, and identifying the fibers $E_x$ of $E=T\reals^2\big|_X$ with $\reals^2$, the action of $D_n$ on $E$ takes a vector $v\in E_{x_j}$ to the vector $gv \in E_{x_{g(j)}}$. We can identify the real $2N$-dimensional vector space $\sections$ with $\reals^{2N}$ by sending a displacement $\delta$ to the vector $(\delta(x_1),\cdots,\delta(x_N))$. From the definition of $\sigma^E$, this isomorphism sends $\sigma^E(g)(\delta)$ to $(\cdots,g\delta(x_j),\cdots)$, where $g\delta(x_j)$ occupies the $g(j)$-th pair of coordinates. Therefore we have that $\sigma^E$ is isomorphic to the representation $\sigma_X:D_n \longrightarrow GL(\reals^{2N})$ defined by:
\begin{equation} \label{repconf}
 \sigma_X(g)(w)=(g w_{g^{-1}(1)},\cdots,gw_{g^{-1}(N)}),\ \ \forall g \in D_n,
\end{equation}
where $w=(w_1,\cdots,w_N)$. Observe that, for a $D_n$-symmetric configuration $\varkappa$, we have that
\[ \sigma_X(g)(\varkappa)=\varkappa, \ \ \forall g\in D_n.\]
So the symmetry of the set $X \subset \reals^2$  implies that its originating configuration $\varkappa \in \reals^{2N}$ is a fixed point of the representation $\sigma_X$. 

\section{Symmetric Functions and Equivariant Hessians} \label{sec4}
 
  Let $F:\mathfrak{O}\longrightarrow \reals$ be a function defined on the open subset $\mathfrak{O}\subset \reals^{2N}$. Suppose a group $G$ acts on $\reals^{2N}$ so that $g \cdot \varkappa \in \mathfrak{O}$ for all $g \in G, \varkappa \in \mathfrak{O}$. We say that $F$ is \emph{symmetric} if:
\begin{equation*} \label{symmf}
 F(g \cdot \varkappa)=F(\varkappa), \ \ \forall g \in G, \forall \varkappa \in \mathfrak{O}.
\end{equation*}
Suppose $F$ is of class $C^2$ and $G$ acts through a linear representation $\varrho$. After differentiating both sides of the above equation twice, we obtain
\begin{equation} \label{hesssymm}
 \varrho(g)^T D^2F(\varrho(g)(\varkappa)) \varrho(g)=D^2F(\varkappa).
\end{equation}


\begin{prop} \label{hess1} Let $\varkappa \in \reals^{2N}$ be a $D_n$-symmetric configuration. If $F$ is $D_n$-symmetric and of class $C^2$, and the action of $G=D_n$ is given by the representation $\sigma_X$ defined by equation~\eqref{repconf}, then
\begin{equation} \label{hessrho}
D^2F(\varkappa)\sigma_X(g)=\sigma_X(g) D^2F(\varkappa), \ \ \forall g \in D_n.
\end{equation}
\end{prop}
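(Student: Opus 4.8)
The plan is to derive \eqref{hessrho} directly from the invariance identity \eqref{hesssymm} by exploiting the fact that $\varkappa$ is a fixed point of $\sigma_X$. Recall from the end of Section~\ref{sec3} that a $D_n$-symmetric configuration satisfies $\sigma_X(g)(\varkappa)=\varkappa$ for all $g\in D_n$. Applying \eqref{hesssymm} with $G=D_n$, $\varrho=\sigma_X$, and substituting $\varrho(g)(\varkappa)=\sigma_X(g)(\varkappa)=\varkappa$ on the left-hand side immediately yields
\[
\sigma_X(g)^T D^2F(\varkappa)\,\sigma_X(g)=D^2F(\varkappa),\qquad \forall g\in D_n.
\]
This is the crux: the Hessian is evaluated at the \emph{same} point $\varkappa$ on both sides precisely because $\varkappa$ is $\sigma_X$-invariant.

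Next I would left-multiply by $\sigma_X(g)$ and right-multiply by $\sigma_X(g)^{-1}$, giving
\[
D^2F(\varkappa)=\sigma_X(g)\,D^2F(\varkappa)\,\sigma_X(g)^T\sigma_X(g)^{-1}.
\]
To conclude, I need $\sigma_X(g)^T=\sigma_X(g)^{-1}$, i.e.\ that each operator $\sigma_X(g)$ is orthogonal. This follows from the explicit formula \eqref{repconf}: $\sigma_X(g)$ acts by permuting the $N$ coordinate-pairs according to $g$ and applying the $2\times 2$ matrix $\sigma(g)$ in each block; since $\sigma(r)$ is a rotation and $\sigma(s)$ a reflection, $\sigma(g)\in O(2)$ for every $g$, and a permutation-of-blocks composed with block-wise orthogonal maps is orthogonal on $\reals^{2N}$. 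Hence $\sigma_X(g)^T\sigma_X(g)=I_{2N}$, and the displayed equation collapses to
\[
D^2F(\varkappa)=\sigma_X(g)\,D^2F(\varkappa)\,\sigma_X(g)^{-1},
\]
which, upon right-multiplying by $\sigma_X(g)$, is exactly \eqref{hessrho}.

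There is essentially no hard part here; the proof is a short computation once the two ingredients are in place. The only point requiring a moment's care is the orthogonality of $\sigma_X(g)$, which I would state as a brief lemma or simply remark in passing, citing the block structure of $\sigma_X$ in \eqref{repconf} together with the fact that the generators $\sigma(r)$ and $\sigma(s)$ of the standard representation lie in $O(2)$. Everything else is a direct substitution using the fixed-point property of the symmetric configuration established at the close of Section~\ref{sec3}.
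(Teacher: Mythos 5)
Your argument is correct and is essentially the paper's own proof: both substitute the fixed-point identity $\sigma_X(g)(\varkappa)=\varkappa$ into~\eqref{hesssymm} and then use $\sigma_X(g)^T=\sigma_X(g)^{-1}$ to convert the congruence into the commutation relation~\eqref{hessrho}. The only cosmetic difference is that you justify orthogonality via the block-permutation structure of $\sigma_X$, while the paper simply notes that $\sigma_X(g)$ preserves the standard inner product on $\reals^{2N}$; these are the same fact.
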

\begin{proof} At the end of the last section, we saw that a $D_n$-symmetric configuration $\varkappa$ is fixed by $\sigma_X$. We apply formula~\eqref{hesssymm}, together with the fact that $\sigma_X(g)^T=\sigma_X(g)^{-1}$, which holds since $\sigma_X(g)$ preserves the inner product of $\reals^{2N}$, for all $g \in D_n$.
\end{proof}
 
 
 We apply proposition~\ref{hess1} to the $N$-body and $N$-vortex problems. Let $\Delta=\{\varkappa \in \reals^{2N}\,/\,x_i = x_j, \text{ for some } i\neq j\}$. Consider the function $F:\reals^{2N}\setminus \Delta \longrightarrow \reals$ given by either the potential energy
\[ U_{\gamma}(\varkappa)=\frac{1}{2\gamma+2}\sum_{i <j}m_im_j ||x_i-x_j||^{2\gamma+2}, \ \ \text{for some} \ \ \gamma \neq -1,\]
or the Hamiltonian
\[ H_{-1}(\varkappa)= -\sum_{i<j}m_im_j \ln ||x_i-x_j||.\]
$F$ is symmetric with respect to $\sigma_X$ as long as $m_{g(i)}=m_i$, $i=1,\cdots,N$, for all $g \in D_n$. We have the following definition.

\begin{definition} \label{equivariant} Let $\vs$ be a vector space and $\varrho:G \longrightarrow GL(\vs)$ be a linear representation. A linear operator $L:\vs  \longrightarrow \vs$ is \emph{equivariant} with repect to $\varrho$, or \emph{$\varrho$-equivariant}, if
\[ \varrho(g) \circ L = L \circ \varrho(g), \qquad \forall g \in G.\] 
\end{definition}
\noindent The term equivariant extends to matrices in a natural manner.

\begin{prop} \label{hess2} Let $\varkappa$ be a $D_n$-symmetric configuration and recall the mass matrix $M=\emph{diag}(m_1,m_1,\cdots,m_N,m_N)$. 
Suppose $m_{g(i)}=m_i$ , $i=1,\cdots,N$, for all $g \in D_n$. The matrices $M^{-1}D\nabla U_{\gamma}(\varkappa)$ and $M^{-1}D\nabla H_{-1}(\varkappa)$ are $\sigma_X$-equivariant.
\end{prop}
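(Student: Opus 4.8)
The plan is to combine Proposition~\ref{hess1} with the elementary observation that the mass matrix $M$ is itself $\sigma_X$-equivariant under the stated hypothesis on the $m_i$, and then invoke the fact that $\sigma_X$-equivariant operators are closed under composition.

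First I would record that, by the discussion preceding the statement, the condition $m_{g(i)}=m_i$ for all $i$ and all $g\in D_n$ makes $U_\gamma$ and $H_{-1}$ symmetric with respect to $\sigma_X$ on $\reals^{2N}\setminus\Delta$; since a $D_n$-symmetric configuration $\varkappa$ consists of $N$ distinct points we have $\varkappa\notin\Delta$, and both functions are of class $C^2$ in a neighborhood of $\varkappa$. Proposition~\ref{hess1} then applies and yields that the Hessians $D\nabla U_\gamma(\varkappa)=D^2U_\gamma(\varkappa)$ and $D\nabla H_{-1}(\varkappa)=D^2H_{-1}(\varkappa)$ are $\sigma_X$-equivariant, that is, they commute with every $\sigma_X(g)$.

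Next I would check directly that $M$ commutes with each $\sigma_X(g)$. Writing $w=(w_1,\dots,w_N)$ with $w_j\in\reals^2$, formula~\eqref{repconf} shows that the $i$-th $\reals^2$-block of $\sigma_X(g)(Mw)$ is $g\,(Mw)_{g^{-1}(i)}=g\,(m_{g^{-1}(i)}w_{g^{-1}(i)})=m_{g^{-1}(i)}\,g\,w_{g^{-1}(i)}$, whereas the $i$-th block of $M\,\sigma_X(g)(w)$ is $m_i\,g\,w_{g^{-1}(i)}$. These agree for all $w$ precisely when $m_i=m_{g^{-1}(i)}$ for every $i$, which is the hypothesis $m_{g(i)}=m_i$ rewritten. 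Hence $M\,\sigma_X(g)=\sigma_X(g)\,M$ for all $g\in D_n$; conjugating by $M^{-1}$ shows $M^{-1}$ is $\sigma_X$-equivariant as well.

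Finally, since equivariance is preserved under composition, the products $M^{-1}D\nabla U_\gamma(\varkappa)$ and $M^{-1}D\nabla H_{-1}(\varkappa)$ are $\sigma_X$-equivariant, which is the assertion. The only genuinely delicate point I anticipate is the index bookkeeping in the computation that $M$ and $\sigma_X(g)$ commute; everything else is an immediate appeal to Proposition~\ref{hess1} together with the remark that $\sigma_X$-equivariant operators form an algebra, and one should simply be careful to note $\varkappa\notin\Delta$ so that the Hessians are defined at $\varkappa$.
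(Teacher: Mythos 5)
Your proposal is correct and follows essentially the same route as the paper: invoke Proposition~\ref{hess1} to get equivariance of the Hessian, verify by the same blockwise index computation that $M$ (hence $M^{-1}$) commutes with every $\sigma_X(g)$ under the hypothesis $m_{g(i)}=m_i$, and conclude by closure of equivariant operators under products. Your added remarks that $\varkappa\notin\Delta$ and that the computation reduces to $m_i=m_{g^{-1}(i)}$ are just slightly more explicit versions of what the paper leaves implicit.
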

\begin{proof} Let $F=U_{\gamma}$, $\gamma \neq -1$, or $F=H_{-1}$. The product of equivariant matrices is clearly equivariant, and since the matrices $D\nabla F(\varkappa)$ and $D^2F(\varkappa)$ coincide, proposition~\ref{hesssymm} implies the former matrix commutes with $\sigma_X (g)$, for all $g \in D_n$. We show that $M$ also commutes with all the $\sigma_X(g)$. Indeed, if $w=(w_1,\cdots,w_N)\in \reals^{2N}$ and $g \in D_n$, we have that
\begin{align*}
 M\sigma_X(g)(w)&=(m_1gw_{g^{-1}(1)}\cdots,m_N g w_{g^{-1}(N)})\\
            &=(g m_1 w_{g^{-1}(1)},\cdots,g m_N w_{g^{-1}(N)})\\
            &=\sigma_X(g)(Mw),
\end{align*}
as long as $m_{g(i)}=m_i$, for all $i=1,\cdots,N$ and all $g\in D_n$.
           
\end{proof}


\section{Block-diagonal Forms of Equivariant Operators} \label{perisot}

 In the following paragraphs we explain how an equivariant linear operator can be put in block-diagonal form through the usage of symmetry. This is a classical application of group representation theory which goes back to the early work of E. P. Wigner on Group Theory and Quantum Mechanics~\cite{mw93}. The fundamental concepts are introduced in the next paragraph. References~\cite{fs94} and~\cite{jps77} contain detailed expositions of the concepts and results presented in this section. We will consider only representations over the complex field.
 
 A \emph{subrepresentation} of a representation $\varrho:G \longrightarrow GL(\vs)$ is a representation obtained by restricting each $\varrho(g)$ to a $\varrho$-invariant subspace of $\vs$. $\varrho$ is said to be \emph{irreducible} if its only subrepresentations are trivial, i.e., if $\{0\}$ and $\vs$ are the only $\varrho$-invariant subspaces, and $\varrho$ is \emph{completely reducible} if it is the direct sum of irreducible subrepresentations. A basic theorem from representation theory states that the number of isomorphism classes of irreducible representations of a finite group is finite (and equal to the number of conjugacy classes in the group). A fundamental theorem asserts that every representation of a finite group is completely reducible.  

\subsection{Invariant decompositions and block diagonalizations} \label{perirred}
 Let $\varrho_1,\cdots,\varrho_t$ be a complete list of irreducible representations of a finite group $G$ up to isomorphisms. For every $k=1,\cdots,t$, let $(r_{ij}^{(k)})$, $i,j=1,\cdots, d_k$ be a unitary matrix representation isomorphic to $\varrho_k$. The integer $d_k$ is called the \emph{degree} of $\varrho_k$. Given a representation $\varrho:G \longrightarrow GL(\vs)$, define the operators  
\[ p_{ij}^{(k)}= \frac{d_k}{|G|} \sum_{g \in G} r_{ji}^{(k)}(g^{-1})\varrho(g), \quad i,j=1,\cdots, d_k, \ \ k=1,\cdots,t.\]
Schur orthogonality relations for the entry functions $r_{ij}^{(k)}:G \longrightarrow \mathbb{C}$ imply the identities:
\begin{equation} \label{algebra}
p_{ij}^{(k)}\circ p_{i'j'}^{(k')}=\begin{cases}
																											p_{ij'}^{(k)},& \text{if } j=i',k=k',\\
																											0, & \text{if } j \neq i', \text{or } k \neq k'.
																											\end{cases}
\end{equation}
The next theorem states some remarkable properties of the $p_{ij}^{(k)}$ and introduces relevant terminology.

\begin{thm} \label{maintheo} Suppose $k$ is fixed. The operators $p_{ij}^{(k)}$ have the following properties:
\begin{enumerate}
\item The operators $p_{ii}^{(k)}$ are \emph{projections} whose images $\vs_{i}^{(k)}$ are such that:
\begin{equation} \label{equivdec}
 \vs^{(k)}=\bigoplus_{i=1}^{d_k} \vs_{i}^{(k)}
\end{equation}
is the space of a subrepresentation of $\varrho$ whose decompositions into irreducible subrepresentations consist of precisely $\mu_k=\emph{dim} \vs^{(k)}_1$ copies (up to isomorphism) of the irreducible representation $\varrho_k$. We have that $\vs=\oplus_{k=1}^t \vs^{(k)}$, and we call $\vs^{(k)}$ the $\varrho_k$-\emph{isotypic component} of $\vs$. The number $\mu_k$ is called the \emph{multiplicity} of $\varrho_k$ in $\varrho$.
\item For every $i, j$, $p_{ij}^{(k)}$ is an isomorphism between $\vs_{j}^{(k)}$ and $\vs_{i}^{(k)}$, and $p_{ij}^{(k)}$ vanishes on subspaces complementary to $\vs_j^{(k)}$ in $\vs$. In particular, $\mu_k=\emph{dim} \vs^{(k)}_i$, for all $i$. We call each $p_{ij}^{(k)}$, with $i\neq j$, a \emph{transfer isomorphism}.
\end{enumerate}
\end{thm}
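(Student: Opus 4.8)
The plan is to extract the entire statement from the multiplication rule~\eqref{algebra}, using as external input only the column (completeness) form of Schur orthogonality and one transformation identity relating $\varrho(h)$ to the $p_{ij}^{(k)}$. For part (1), \eqref{algebra} with $i=i'=j=j'$, $k=k'$ gives $p_{ii}^{(k)}\circ p_{ii}^{(k)}=p_{ii}^{(k)}$, so every $p_{ii}^{(k)}$ is a projection; let $\vs_i^{(k)}$ be its image. Applying \eqref{algebra} to distinct pairs $(k,i)\neq(k',i')$ gives $p_{ii}^{(k)}\circ p_{i'i'}^{(k')}=0$ (and symmetrically), so these projections are mutually orthogonal. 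To see they exhaust $\vs$, I would compute
\[ \sum_{k,i}p_{ii}^{(k)}=\frac{1}{|G|}\sum_{g\in G}\Bigl(\sum_{k}d_k\,\chi_k(g^{-1})\Bigr)\varrho(g), \]
where $\chi_k=\sum_i r_{ii}^{(k)}$ is the character of $\varrho_k$, and invoke the dual Schur orthogonality relation $\sum_k d_k\chi_k(g)=|G|\,\delta_{g,e}$ (see~\cite{fs94,jps77}) to obtain $\sum_{k,i}p_{ii}^{(k)}=\varrho(e)=\mathrm{id}_\vs$. A mutually orthogonal family of projections summing to the identity yields $\vs=\bigoplus_{k,i}\vs_i^{(k)}$; putting $\vs^{(k)}=\bigoplus_i\vs_i^{(k)}$ gives $\vs=\bigoplus_k\vs^{(k)}$.

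For part (2), \eqref{algebra} yields $p_{ii}^{(k)}\circ p_{ij}^{(k)}=p_{ij}^{(k)}$ and $p_{ij}^{(k)}\circ p_{jj}^{(k)}=p_{ij}^{(k)}$, so $p_{ij}^{(k)}$ maps $\vs$ into $\vs_i^{(k)}$ and kills $\ker p_{jj}^{(k)}$; since $p_{ij}^{(k)}\circ p_{i'i'}^{(k')}=0$ whenever $(k',i')\neq(k,j)$, it in fact vanishes on the complement $\bigoplus_{(k',i')\neq(k,j)}\vs_{i'}^{(k')}$ of $\vs_j^{(k)}$. Next, $p_{ji}^{(k)}\circ p_{ij}^{(k)}=p_{jj}^{(k)}$ and $p_{ij}^{(k)}\circ p_{ji}^{(k)}=p_{ii}^{(k)}$ show that $p_{ij}^{(k)}\colon\vs_j^{(k)}\to\vs_i^{(k)}$ and $p_{ji}^{(k)}\colon\vs_i^{(k)}\to\vs_j^{(k)}$ are mutually inverse, hence isomorphisms; in particular $\dim\vs_i^{(k)}$ does not depend on $i$, and this common value is the $\mu_k$ of part (1).

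It remains to identify the subrepresentation on $\vs^{(k)}$. The extra ingredient is the identity $\varrho(h)\circ p_{ij}^{(k)}=\sum_{l=1}^{d_k}r_{li}^{(k)}(h)\,p_{lj}^{(k)}$, which I would derive from the definition of $p_{ij}^{(k)}$ by the change of summation variable $g\mapsto hg$ together with the homomorphism property of the matrix representation $r^{(k)}$. Then I fix a basis $f_1,\dots,f_{\mu_k}$ of $\vs_1^{(k)}$ and set $f_\alpha^{(i)}:=p_{i1}^{(k)}(f_\alpha)$; by part (2) these form a basis of $\vs_i^{(k)}$ for each $i$, hence the whole family is a basis of $\vs^{(k)}$. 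The transformation identity gives $\varrho(h)f_\alpha^{(i)}=\sum_l r_{li}^{(k)}(h)\,f_\alpha^{(l)}$, so for each $\alpha$ the space $W_\alpha:=\mathrm{span}\{f_\alpha^{(i)}\,:\,i=1,\dots,d_k\}$ is $\varrho$-invariant and the matrix of $\varrho(h)|_{W_\alpha}$ in this basis is precisely $(r_{li}^{(k)}(h))$, i.e.\ $W_\alpha\simeq\varrho_k$. Since $\vs^{(k)}=\bigoplus_{\alpha=1}^{\mu_k}W_\alpha$ (the sum is direct because the $f_\alpha^{(i)}$ form a basis of $\vs^{(k)}$), the space $\vs^{(k)}$ is $\varrho$-invariant and its decomposition into irreducibles consists of exactly $\mu_k$ copies of $\varrho_k$, with $\mu_k=\dim\vs_1^{(k)}$. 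Finally, since each $\vs^{(k')}$ with $k'\neq k$ contains only copies of $\varrho_{k'}\not\simeq\varrho_k$, Schur's lemma rules out any further copy of $\varrho_k$ in $\varrho$, so $\mu_k$ is indeed the multiplicity of $\varrho_k$ in $\varrho$.

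The computations are all index bookkeeping once the two external facts are available: the completeness relation $\sum_k d_k\chi_k(g)=|G|\,\delta_{g,e}$ used for $\sum_{k,i}p_{ii}^{(k)}=\mathrm{id}_\vs$, and the transformation law $\varrho(h)p_{ij}^{(k)}=\sum_l r_{li}^{(k)}(h)p_{lj}^{(k)}$, both of which are standard consequences of Schur orthogonality in the same vein as~\eqref{algebra}. The step deserving the most care is checking that $p_{i1}^{(k)}$ transports a basis of $\vs_1^{(k)}$ to a basis of each $\vs_i^{(k)}$, so that the vectors $f_\alpha^{(i)}$ really do furnish a basis of $\vs^{(k)}$ adapted to the block structure — this is exactly the place where part (2) is used to pass from the abstract decomposition of part (1) to the explicit isotypic form.
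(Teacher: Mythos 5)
Your proof is correct and complete. The paper itself states this theorem without proof, deferring to the cited references (F\"assler--Stiefel and Serre), and your argument is exactly the standard one found there: idempotence, mutual orthogonality and completeness of the $p_{ii}^{(k)}$ obtained from~\eqref{algebra} together with the relation $\sum_k d_k\chi_k(g)=|G|\,\delta_{g,e}$; the mutual-inverse identities $p_{ji}^{(k)}\circ p_{ij}^{(k)}=p_{jj}^{(k)}$ and $p_{ij}^{(k)}\circ p_{ji}^{(k)}=p_{ii}^{(k)}$ for the transfer maps; and the transformation law $\varrho(h)\circ p_{ij}^{(k)}=\sum_l r_{li}^{(k)}(h)\,p_{lj}^{(k)}$ to exhibit $\vs^{(k)}$ as a direct sum of $\mu_k$ copies of $\varrho_k$, with Schur's lemma confirming that no further copies of $\varrho_k$ live in the other components.
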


  The operators $p_{ij}^{(k)}$ may have useful additional properties when $\vs$ is a complex inner product space.
\begin{lemma} \label{ortproj}
Suppose $\vs$ has an inner product $\langle,\rangle$ which is preserved by $\varrho$, i.e.,
\[ \langle \varrho(g)(v),\varrho(g)(w) \rangle=\langle v,w\rangle, \ \ \forall g \in G, \ v,w \in \vs.\]
The projections $p_{ii}^{(k)}:\vs \longrightarrow \vs_{i}^{(k)}$ are orthogonal projections and the transfer isomorphisms $p_{ij}^{(k)}:\vs_{j}^{(k)}\longrightarrow \vs_{i}^{(k)}$ are isometries. 
\end{lemma}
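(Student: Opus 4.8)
The plan is to derive both claims directly from the two defining formulas, namely the expression
\[
p_{ij}^{(k)}= \frac{d_k}{|G|} \sum_{g \in G} r_{ji}^{(k)}(g^{-1})\varrho(g),
\]
the unitarity of the matrices $(r_{ij}^{(k)}(g))$, the $\varrho$-invariance of $\langle,\rangle$, and the algebra relations~\eqref{algebra} from Theorem~\ref{maintheo}. First I would compute the adjoint of a general $p_{ij}^{(k)}$ with respect to $\langle,\rangle$. Since $\varrho$ preserves the inner product, $\varrho(g)^* = \varrho(g)^{-1} = \varrho(g^{-1})$; taking adjoints termwise and using that the scalars $r_{ji}^{(k)}(g^{-1})$ become their complex conjugates, one gets
\[
\bigl(p_{ij}^{(k)}\bigr)^* = \frac{d_k}{|G|} \sum_{g \in G} \overline{r_{ji}^{(k)}(g^{-1})}\,\varrho(g^{-1}) = \frac{d_k}{|G|} \sum_{h \in G} \overline{r_{ji}^{(k)}(h)}\,\varrho(h),
\]
after reindexing $h=g^{-1}$. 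Now unitarity of the representation matrix says $\overline{r_{ji}^{(k)}(h)} = r_{ij}^{(k)}(h^{-1})$, so the right-hand side is exactly $p_{ji}^{(k)}$. Hence the key identity is
\[
\bigl(p_{ij}^{(k)}\bigr)^* = p_{ji}^{(k)}.
\]

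With this identity in hand, both conclusions follow quickly. For the projections: $p_{ii}^{(k)}$ is idempotent by~\eqref{algebra} (the case $j=i'=i$, $k=k'$ gives $p_{ii}^{(k)}\circ p_{ii}^{(k)} = p_{ii}^{(k)}$), and the adjoint identity with $i=j$ gives $\bigl(p_{ii}^{(k)}\bigr)^* = p_{ii}^{(k)}$, so $p_{ii}^{(k)}$ is a self-adjoint idempotent, i.e. an orthogonal projection onto its image $\vs_i^{(k)}$. For the transfer isomorphisms: by Theorem~\ref{maintheo}(2), $p_{ij}^{(k)}$ restricted to $\vs_j^{(k)}$ is an isomorphism onto $\vs_i^{(k)}$, and $p_{ji}^{(k)}$ restricted to $\vs_i^{(k)}$ is its inverse (this also comes from~\eqref{algebra}: $p_{ji}^{(k)}\circ p_{ij}^{(k)} = p_{jj}^{(k)}$, which acts as the identity on $\vs_j^{(k)}$, and symmetrically $p_{ij}^{(k)}\circ p_{ji}^{(k)} = p_{ii}^{(k)}$ is the identity on $\vs_i^{(k)}$). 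Combining the inverse relation with the adjoint identity, for $v,w \in \vs_j^{(k)}$ we get
\[
\langle p_{ij}^{(k)}v,\, p_{ij}^{(k)}w \rangle = \langle v,\, \bigl(p_{ij}^{(k)}\bigr)^* p_{ij}^{(k)} w \rangle = \langle v,\, p_{ji}^{(k)} p_{ij}^{(k)} w \rangle = \langle v,\, p_{jj}^{(k)} w \rangle = \langle v, w \rangle,
\]
so $p_{ij}^{(k)}\colon \vs_j^{(k)} \to \vs_i^{(k)}$ is an isometry.

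The main obstacle, such as it is, is purely bookkeeping: making sure the conjugation, the inversion $h = g^{-1}$, and the unitarity relation $\overline{r_{ji}^{(k)}(h)} = r_{ij}^{(k)}(h^{-1})$ are combined with the indices in the correct order, since a transposition or a missed inverse would land on $p_{ij}^{(k)}$ instead of $p_{ji}^{(k)}$ and break the argument. There is no deep difficulty here; the lemma is a formal consequence of the termwise adjoint computation together with the already-established algebraic structure of the $p_{ij}^{(k)}$ from Theorem~\ref{maintheo} and~\eqref{algebra}. I would present the adjoint identity $\bigl(p_{ij}^{(k)}\bigr)^* = p_{ji}^{(k)}$ as the single lemma-within-the-proof and then read off the two statements in two short lines.
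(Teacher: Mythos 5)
Your proposal is correct and follows essentially the same route as the paper: the paper's proof likewise establishes the adjoint identity $\overline{[p_{ij}^{(k)}]^T}B = Bp_{ji}^{(k)}$ (i.e.\ $\bigl(p_{ij}^{(k)}\bigr)^* = p_{ji}^{(k)}$) via the unitarity relation $\overline{r_{ji}^{(k)}(g^{-1})}=r_{ij}^{(k)}(g)$ and the invariance of the inner product, and then invokes the relations~\eqref{algebra} together with Theorem~\ref{maintheo}(2) to conclude. Your write-up merely phrases the computation operator-theoretically rather than in matrix form and spells out the final two deductions a bit more explicitly.
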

\begin{proof} Fix $k$ and choose a basis for $\vs$. Let $B$ be the matrix of $\langle,\rangle$ with respect to the chosen basis. We have that
\begin{align*}
 \overline{[p_{ij }^{(k)}]^T} B&=\left(\frac{d_k}{|G|}\sum_{g \in G}\overline{r_{j i}^{(k)}(g^{-1})}\,\overline{\varrho(g)^T}\right)B\\
                                         &=\frac{d_k}{|G|}\sum_{g \in G}\overline{r_{j i}^{(k)}(g^{-1})}B \varrho(g^{-1})\\
                                         &=B\left(\frac{d_k}{|G|}\sum_{g \in G}r_{ij}^{(k)}(g) \varrho(g^{-1})\right)=Bp_{ji}^{(k)}.
\end{align*}
Notice we used the identity $\overline{r_{j i}^{(k)}(g^{-1})}=r_{ij}^{(k)}(g)$, which holds since $(r_{ij}^{(k)})$ is a unitary matrix. The identities~\eqref{algebra} and part (2) of theorem~\ref{maintheo} imply that the inverse of $p_{ij}^{(k)}:\vs_{j}^{(k)}\longrightarrow \vs_{i}^{(k)}$ is the restriction of $p_{j i}^{(k)}$ to $\vs_{i}^{(k)}$.
\end{proof}

  The decomposition~\eqref{equivdec} in part (1) of theorem~\ref{maintheo} plays the key role in block diagonalizing equivariant operators (see definition~\ref{equivariant}). Suppose $L:\vs \longrightarrow \vs$ is a $\varrho$-equivariant operator. Fix $k$ and consider a basis $\{v_{1j}^{(k)}\}_{j=1}^{\mu_k}$ of $\vs_{1}^{(k)}$. Using the transfer isomorphisms $p_{i1}^{(k)}$, we obtain the bases $\{v_{ij}^{(k)}=p_{i1}^{(k)}(v_{1j}^{(k)})\}_{j=1}^{\mu_k}$ for each $\vs_{i}^{(k)}$, $i=1,\cdots,d_k$. The equivariance of $L$ implies that $L$ commutes with each projection $p_{ii}^{(k)}$, so $L$ leaves each $\vs_{i}^{(k)}=\text{Image}(p_{ii}^{(k)})$ invariant. Moreover, since $L$ also commutes with the transfer isomorphisms, the matrix representation of $L$ with respect to the basis $\{v_{ij}^{(k)}\}_{j=1}^{\mu_k}$ is the same for every $i=1,\cdots,d_k$. By repeating the construction just described for each $k=1,\cdots,t$, and observing that the isotypic decomposition $\vs=\oplus_{k=1}^t \vs^{(k)}$ is $L$-invariant, we obtain a matrix for $L$ in block-diagonal form, 
\[\text{diag}(L_1,\cdots,L_1,\cdots,L_t,\cdots,L_t),\]
with $d_1$ blocks $L_1$ of size $\mu_1 \times \mu_1$, $\cdots$, $d_t$ blocks $L_t$ of size $\mu_t \times \mu_t$. 

\begin{rem} \label{regrepequiv}
 Using character theory, it can be shown that the regular representation of a finite group $G$ is isomorphic to the direct sum of copies of $\varrho_j$ with multiplicities equal to the corresponding degrees, that is,
 $\reg \simeq \bigoplus_{k=1}^t d_k \varrho_k.$
Thus every $\reg$-equivariant operator $L$ has a block-diagonal form consisting of $d_1$ blocks $L_1$ of size $d_1\times d_1$, $\cdots$, $d_t$ blocks $L_t$ of size $d_t\times d_t$.
\end{rem}

\subsection{Decomposition of $\bm{\sigma^E}$} \label{sec6}

 We apply the results from section~\ref{perirred} to $\sigma^E:D_n \longrightarrow GL(\sections)$, where $\sections$ is the space of planar displacements of a ring system $X$ of type $(a,b,c)$. 
 
 The following table contains a complete list of matrix representatives of the isomorphism classes of irreducible representations of the groups $D_n$, for $n$ even. Let $\theta=2 \pi/n$.
 
 
\vspace{.2cm}
\begin{center}
\begin{tabular}{c|cc} \label{characterDn}
 &   $r^j$ & $r^js$\\
\hline
$\tau$ & $1$ & $1$ \\
$\alpha$ & $1$  &$-1$\\
$\phi$ & $(-1)^j$ & $(-1)^j$\\
$\psi$ & $(-1)^j$ & $(-1)^{j+1}$\\
$\begin{array}{ccc}
&\varrho_k&\\
&k=1,\cdots,\frac{n}{2}-1
\end{array}$  & $ \left[\begin{array}{cc}
\cos kj\theta & -\sin kj\theta \\
\sin kj\theta & \cos kj\theta
\end{array}
\right]$ & $ \left[\begin{array}{cc}
\cos kj\theta & \sin kj\theta \\
\sin kj\theta & -\cos kj\theta
\end{array}
\right]$
\end{tabular}
\end{center}
\vspace{.2cm}
The representations $\tau$ and $\alpha$ are known as the \emph{trivial representation} and the \emph{alternating representation}, respectively. If $n$ is odd, the representations $\phi$ and $\psi$ are absent, and $k$ ranges from 1 to $\frac{n-1}{2}$. Notice that $\varrho_1$ is just the standard representation $\sigma$. 

 Next we list the projections and transfer isomorphisms for $\sigma^E$. For the trivial representation, we have the projection:
\[ p^{(\tau)}=\frac{1}{2n}\sum_{j=1}^n (\sigma^E(r^j)+\sigma^E(r^js))=\frac{1}{2n}\sum_{j=1}^n \sigma^E(r^j)(\sigma^E(e)+\sigma^E(s)).\]
In order to simplify our notation, let us henceforth omit $\sigma^E$, so that we have
\begin{equation} \label{ptau}
 p^{(\tau)}=\left[\frac{1}{2n}\sum_{j=1}^n r^j\right](e+s).
\end{equation}
We denote the operator between brackets by $c_0(r)$. More generally, for each $k$ define
\begin{equation} \label{cyclic}
c_k(r)=\frac{1}{2n}\sum_{j=1}^n \cos\frac{2 \pi kj}{n}r^j, \quad s_k(r)=\frac{1}{2n}\sum_{j=1}^n \sin\frac{2 \pi kj}{n}r^j
\end{equation}
Thus we have the projections \label{projs}
\[ p^{(\tau)}=c_0(r)(e+s), \ \ \ p^{(\alpha)}=c_0(r)(e-s), \ \ \ p^{(\phi)}=c_0(-r)(e+s), \ \ \ p^{(\psi)}=c_0(-r)(e-s),\]
and the projections and transfer isomorphisms \label{transfers} associated with the representations $\varrho_1,\cdots,\varrho_t$ are 
\[ p_{11}^{(k)}=2 c_k(r)(e+s),  \, \, p_{22}^{(k)}=2c_k(r)(e-s),  \, \, p_{12}^{(k)}=2 s_k(r)(-e+s), \, \, p_{21}^{(k)}=2 s_k(r)(e+s).\]

 The above projections, when applied to the space of displacements $\sections$, produce the isotypic decomposition of $\sections$, which for $n$ even looks like:
\begin{equation} \label{isotypical}
 \sections = \vs^{(\tau)} \oplus \vs^{(\alpha)} \oplus \vs^{(\phi)}\oplus \vs^{(\psi)}\oplus\left( \bigoplus_{k=1}^t \vs^{(k)}\right)
\end{equation}
where
\[ \vs^{(\varrho)}=\text{Image}\left(p^{(\varrho)}\right), \ \ \varrho=\tau,\alpha,\phi,\psi, \quad \vs^{(k)}=\vs_1^{(k)} \oplus \vs_2^{(k)},\] 
with $\vs_i^{(k)}=\text{Image}\left(p_{ii}^{(k)}\right)$, $i=1,2,\ k=1,\cdots,n/2-1$ (cf. theorem~\ref{maintheo}). From theorem~\ref{standardE}, remark~\ref{regrepequiv} and the previous table, the multiplicities of the irreducible representations of $D_n$ in $\sigma^E$ are
\begin{equation} \label{multip}
 \mu_{\tau}=\mu_{\alpha}=\mu_{\phi}=\mu_{\psi}=b+2c, \ \ \ \mu_{\sigma}=a+2(b+2c), \ \ \ \mu_k=2(b+2c), \ \ k>1.
\end{equation}
Thus, using the method described at the end of section~\ref{perirred}, we can construct bases for $\sections$ with respect to which any $\sigma^E$-equivariant operator can be put in block-diagonal form with four (resp. two) blocks of size $(b+2c)\times (b+2c)$, if $n$ is even (resp. odd), two identical blocks of size $a+2(b+2c)\times a+2(b+2c)$, and $n/2-2$ (resp. $(n-3)/2$)  pairs of identical blocks of size $2(b+2c)\times2(b+2c)$.

 

\section{Additional structures and $J$ as an operator on $\sections$} \label{sec7}

 Let $J$ and $M$ be as in equation~\eqref{jem}. Before proceeding to the applications of section~\ref{perisot}, we introduce some useful structures in $\sections$. We also need to examine how $J$ relates to the canonical representation $\sigma^E$ and its invariant decomposition~\eqref{isotypical}. 
 
 Throughout the remainder of this paper, we suppose, as in the end of section~\ref{sec4}, that every $D_n$-symmetric configuration $\varkappa \in \reals^{2N}$ of point masses has a \emph{symmetric mass distribution}, i.e., $m_{g(i)}=m_{i}$ for every $g\in D_n$ and every $i=1,\cdots,N$.

 We use the natural isomorphism $\sections \simeq \reals^{2N}$ to define an (indefinite) inner product $\langle,\rangle_M$ on $\sections$ corresponding to the inner product defined by $M$ on $\reals^{2N}$, which we will also denote by $\langle,\rangle_M$.  As  before, let $\orbit_1,\cdots,\orbit_l$ be the orbits of $D_n$ in $X \subset \reals^2$, the point set associated with the configuration $\varkappa$. We have that
\begin{equation} \label{inner}
 \langle \delta,\epsilon \rangle_M=\sum_{j=1}^l m_j\left(\sum_{x \in \orbit_j}\delta(x) \cdot \epsilon(x)\right),\ \ \forall \delta,\epsilon \in \sections
\end{equation}
where $\cdot$ denotes the inner product inherited by each $T_x\reals^2$ from $\reals^2$. 

 
\begin{lemma} \label{rhoM}
 Consider $\langle,\rangle_M$ on $\reals^{2N}$ and the representation $\sigma_X$ defined by~\eqref{repconf}. For every $g \in D_n$, the isomorphism $\sigma_X(g)$ is an isometry.
\end{lemma}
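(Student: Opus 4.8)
The plan is to show directly from the explicit formula~\eqref{repconf} for $\sigma_X$ and the explicit block-diagonal form~\eqref{inner} of $\langle,\rangle_M$ that $\langle \sigma_X(g)(w),\sigma_X(g)(w')\rangle_M = \langle w,w'\rangle_M$ for all $g \in D_n$ and all $w,w' \in \reals^{2N}$. First I would recall that $\langle w,w'\rangle_M = w^T M w' = \sum_{i=1}^N m_i\, w_i \cdot w_i'$, where $w = (w_1,\dots,w_N)$, $w' = (w_1',\dots,w_N')$, and $\cdot$ is the standard inner product on $\reals^2$. Then, writing $v = \sigma_X(g)(w)$ and $v' = \sigma_X(g)(w')$, by~\eqref{repconf} the $i$-th pair of coordinates of $v$ is $g\, w_{g^{-1}(i)}$ (where juxtaposition means applying $\sigma(g) \in O(2)$), and similarly for $v'$.

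The computation then runs as follows: $\langle v,v'\rangle_M = \sum_{i=1}^N m_i\,(g\,w_{g^{-1}(i)})\cdot(g\,w_{g^{-1}(i)}')$. Since $\sigma(g)$ is an orthogonal transformation of $\reals^2$ — which is immediate from the defining matrices $\sigma(r)$ (a rotation) and $\sigma(s)$ (a reflection) and the fact that $\sigma$ is a homomorphism — each summand equals $m_i\, w_{g^{-1}(i)}\cdot w_{g^{-1}(i)}'$. Now reindex the sum by $j = g^{-1}(i)$, i.e.\ $i = g(j)$, which is a bijection of $\{1,\dots,N\}$ since $g$ acts by a permutation on $X = \{x_1,\dots,x_N\}$; this yields $\sum_{j=1}^N m_{g(j)}\, w_j \cdot w_j'$. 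Invoking the standing symmetric mass distribution hypothesis $m_{g(j)} = m_j$ for all $g \in D_n$ and all $j$, this is exactly $\sum_{j=1}^N m_j\, w_j\cdot w_j' = \langle w,w'\rangle_M$, completing the argument. An equivalent, slightly slicker phrasing is to note that Proposition~\ref{hess2} (its proof) already establishes $M\sigma_X(g) = \sigma_X(g)M$, and that $\sigma_X(g)$ preserves the standard inner product of $\reals^{2N}$ (since it permutes the $\reals^2$-blocks and acts on each by the orthogonal map $\sigma(g)$), so $\sigma_X(g)^T M \sigma_X(g) = \sigma_X(g)^T \sigma_X(g) M = M$.

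I do not anticipate a genuine obstacle here; the statement is essentially bookkeeping. The only point requiring a small amount of care is making the reindexing $j = g^{-1}(i)$ rigorous and flagging clearly where the symmetric-mass-distribution assumption enters — without it the lemma is false, since $M$ would fail to commute with $\sigma_X(g)$. It is also worth stating explicitly, once, that $\sigma(g) \in O(2)$ for every $g \in D_n$, as this is the geometric fact underlying the invariance of each fiberwise inner product; everything else is formal manipulation of~\eqref{repconf} and~\eqref{inner}.
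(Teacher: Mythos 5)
Your proof is correct and follows essentially the same route as the paper: expand $\langle\cdot,\cdot\rangle_M$ fiberwise, use that $\sigma(g)\in O(2)$ to drop the $g$'s, reindex by $j=g^{-1}(i)$, and invoke the symmetric mass distribution $m_{g(i)}=m_i$. No issues.
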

\begin{proof} Let $v=(v_1,\cdots,v_N)$ and $w=(w_1,\cdots,w_N)$. For every $g \in D_n$, we have
\begin{align*}
 \langle \sigma_X(g)(v),\sigma_X(g)(w)\rangle_M &=\sum_{i=1}^N m_i (gv_{g^{-1}(i)} \cdot gw_{g^{-1}(i)})= \sum_{i=1}^N m_i (v_{g^{-1}(i)} \cdot w_{g^{-1}(i)})\\
                                        &=\sum_{i=1}^N m_{g^{-1}(i)} (v_{g^{-1}(i)} \cdot w_{g^{-1}(i)})=\langle v, w\rangle_M.                
\end{align*}
\end{proof}

\begin{prop} \label{standardMprojtrans}
Consider $\sections$ endowed with an inner product $\langle,\rangle_M$ as in~\eqref{inner}. The canonical representation $\sigma^E$ on $\sections$ leaves $\langle,\rangle_M$ invariant. The projections $p^{(\tau)}$, $p^{(\alpha)}$, $p^{(\phi)}$, $p^{(\psi)}$, $p_{11}^{(k)}$, $p_{22}^{(k)}$ are orthogonal and the transfer isomorphisms $p_{12}^{(k)},p_{21}^{(k)}$ are isometries, $k=1,\cdots,t$. As a consequence, the subspaces in the isotypical decomposition~\eqref{isotypical}, as well as each $\vs_1^{(k)}$ and $\vs_2^{(k)}$, are mutually $M$-orthogonal.
\end{prop}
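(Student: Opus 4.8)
The proposition has three assertions, and the natural order is: (i) $\sigma^E$ preserves $\langle,\rangle_M$; (ii) the projections and transfer isomorphisms have the stated orthogonality/isometry properties; (iii) the subspaces of the isotypical decomposition are mutually $M$-orthogonal. First I would establish (i) by transporting the question to $\reals^{2N}$ via the natural isomorphism $\sections \simeq \reals^{2N}$, under which $\sigma^E$ corresponds to $\sigma_X$ of equation~\eqref{repconf} and $\langle,\rangle_M$ to the inner product defined by $M$; then (i) is exactly Lemma~\ref{rhoM}. (There is a small point to address: $\langle,\rangle_M$ is positive definite here, since the $m_i$ are nonzero; but even the indefinite case works, since Lemma~\ref{rhoM}'s proof only uses the symmetric mass distribution hypothesis and not positivity.)

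\textbf{Steps (ii) and (iii).} Given (i), assertion (ii) is almost immediate from Lemma~\ref{ortproj}: that lemma states precisely that when the ambient inner product is $\varrho$-invariant, the operators $p_{ii}^{(k)}$ are orthogonal projections and the $p_{ij}^{(k)}$ with $i \neq j$ are isometries between the corresponding isotypic pieces. The only adjustment needed is that Lemma~\ref{ortproj} is phrased for complex inner product spaces and for the abstract index notation $p_{ij}^{(k)}$, whereas here we have a real space and the explicitly named projections $p^{(\tau)},p^{(\alpha)},p^{(\phi)},p^{(\psi)}$ for the one-dimensional irreducibles together with $p_{11}^{(k)},p_{22}^{(k)},p_{12}^{(k)},p_{21}^{(k)}$ for $\varrho_k$. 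So I would note that the construction of section~\ref{perirred} applies verbatim over $\reals$ when all the irreducible matrix representations in the table are real orthogonal (which they are: each entry is $\pm 1$ or a rotation/reflection matrix), and that the one-dimensional $p^{(\varrho)}$ are the degenerate case $d_k=1$ of $p_{11}^{(k)}$; then Lemma~\ref{ortproj} gives (ii) directly.

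\textbf{Step (iii).} For the mutual $M$-orthogonality, I would argue as follows. For distinct irreducibles $\varrho_k \not\simeq \varrho_{k'}$, the subspaces $\vs^{(k)}$ and $\vs^{(k')}$ are images of the orthogonal projections $p^{(k)} := \sum_i p_{ii}^{(k)}$ and $p^{(k')}$, and by~\eqref{algebra} these projections have orthogonal images and commute, so for $\delta \in \vs^{(k)}$, $\epsilon \in \vs^{(k')}$ we get $\langle \delta,\epsilon\rangle_M = \langle p^{(k)}\delta, \epsilon\rangle_M = \langle \delta, p^{(k)}\epsilon\rangle_M = \langle \delta, 0\rangle_M = 0$, using that $p^{(k)}$ is self-adjoint for $\langle,\rangle_M$ (which is part (ii)) and that $p^{(k)}\epsilon = p^{(k)}p^{(k')}\epsilon = 0$. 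The same argument with $p_{11}^{(k)}$ and $p_{22}^{(k)}$ (again orthogonal self-adjoint projections with orthogonal images, by~\eqref{algebra} and (ii)) shows $\vs_1^{(k)} \perp_M \vs_2^{(k)}$.

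\textbf{Main obstacle.} The substantive content is entirely borrowed from Lemma~\ref{ortproj}, so there is no deep obstacle; the one genuine issue to handle carefully is the real-versus-complex passage. Section~\ref{perisot} explicitly declares ``We will consider only representations over the complex field,'' yet $\sigma^E$ is a real representation. I would resolve this by observing that all the irreducible matrix representatives exhibited in the table for $D_n$ are already real and orthogonal, so the projection/transfer operators $p_{ij}^{(k)}$ defined by the formulas in section~\ref{sec6} are real operators on the real space $\sections$, the identities~\eqref{algebra} hold over $\reals$ verbatim, and the computation in the proof of Lemma~\ref{ortproj} goes through with the bar (complex conjugation) being the identity and $B$ the real symmetric matrix of $\langle,\rangle_M$. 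Alternatively, one complexifies $\sections$, applies the complex results, and descends; but the direct real argument is cleaner here and is what I would write.
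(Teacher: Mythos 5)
Your proposal is correct and follows essentially the same route as the paper, whose proof consists precisely of the observation that all claims follow from Lemma~\ref{rhoM} (giving invariance of $\langle,\rangle_M$ under $\sigma^E\simeq\sigma_X$), Lemma~\ref{ortproj}, and the explicit formulas for the projections and transfer isomorphisms in subsection~\ref{sec6}. The extra care you take with the real-versus-complex passage and the explicit derivation of mutual $M$-orthogonality from~\eqref{algebra} and self-adjointness are details the paper leaves implicit, not a different argument.
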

\begin{proof} Recall $\sigma^E \simeq \sigma_X$. All claims follow from lemmas~\ref{ortproj} and~\ref{rhoM}, as well as the definitions of the projections and transfer isomorphisms in subsection~\ref{sec6}.
\end{proof}

 Next we examine $J$ in more detail. Viewed as an operator on $\sections$, $J$ is given $(J\delta)(x)=R(\pi/2)\delta(x)$, where $R(\pi/2)$ is the rotation by $\pi/2$ of $T_x\reals^2$. By definition, $(\sigma^E(r)(\delta))(x)=r\cdot\delta(r^{-1}\cdot x)$, and $(\sigma^E(s)(\delta))(x)=s\cdot\delta(s\cdot x)$, for all $x \in X$, so we have that
\[ J \circ \sigma^E(r)=\sigma^E(r)\circ J, \quad \text{and} \quad J \circ \sigma^E(s)=-\sigma^E(s)\circ J.\]
or, in simplified notation,
\begin{equation} \label{Jrs}
 Jr=rJ \quad \text{and} \quad Js=-sJ.
\end{equation}

\begin{prop} \label{comJ} Consider the operator $J:\sections \longrightarrow \sections$ and the projections and transfer isomorphisms associated with $\sigma^E$. The following relations hold
\begin{equation} \label{projsJ}
Jp^{(\tau)}=p^{(\alpha)}J, \ \ \, Jp^{(\alpha)}=p^{(\tau)}J, \ \ \, J p^{(\phi)}=p^{(\psi)}J,  \, \ \ J p^{(\psi)}=p^{(\phi)}J,
\end{equation}
and, for $k=1,\cdots,t$,
\begin{equation} \label{transJ}
J p_{11}^{(k)}=p_{22}^{(k)}J, \ \ \, J p_{22}^{(k)}=p_{11}^{(k)}J, \ \ \, Jp_{12}^{(k)}=-p_{21}^{(k)}J, \ \ \, Jp_{21}^{(k)}=-p_{12}^{(k)}J.
\end{equation}
\end{prop}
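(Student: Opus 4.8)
The plan is to compute everything directly from the formulas for the projections and transfer isomorphisms given at the end of subsection~\ref{sec6}, using only the commutation rules~\eqref{Jrs}, namely $Jr=rJ$ and $Js=-sJ$. The key observation is that every projection and transfer isomorphism is built out of the operators $c_k(r)$ and $s_k(r)$ from~\eqref{cyclic}, which are polynomials in $r$ alone, together with the factors $(e\pm s)$. Since $J$ commutes with $r$, it commutes with every $c_k(r)$ and $s_k(r)$; the only sign changes come from pushing $J$ past a reflection factor $(e\pm s)$. So the whole proof reduces to the single elementary identity $J(e\pm s) = (e \mp s)J$, which is immediate from $Js=-sJ$.

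Carrying this out: for the trivial representation, $J p^{(\tau)} = J\,c_0(r)(e+s) = c_0(r)\,J(e+s) = c_0(r)(e-s)J = p^{(\alpha)}J$, and symmetrically $Jp^{(\alpha)} = p^{(\tau)}J$. For $\phi$ and $\psi$ the argument is verbatim the same with $c_0(r)$ replaced by $c_0(-r)$ (still a polynomial in $r$, hence still commuting with $J$): $Jp^{(\phi)} = J\,c_0(-r)(e+s) = c_0(-r)(e-s)J = p^{(\psi)}J$, and likewise $Jp^{(\psi)} = p^{(\phi)}J$. For the two-dimensional blocks, $J p_{11}^{(k)} = J\,2c_k(r)(e+s) = 2c_k(r)(e-s)J = p_{22}^{(k)}J$, and symmetrically $Jp_{22}^{(k)}=p_{11}^{(k)}J$. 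The transfer isomorphisms carry an extra sign because they involve $s_k(r)$ and the combinations $(-e+s)$ and $(e+s)$: $J p_{12}^{(k)} = J\,2s_k(r)(-e+s) = 2s_k(r)\,J(-e+s) = 2s_k(r)(-e-s)J = -\,2s_k(r)(e+s)J = -p_{21}^{(k)}J$, and in the same way $Jp_{21}^{(k)} = J\,2s_k(r)(e+s) = 2s_k(r)(e-s)J = -\,2s_k(r)(-e+s)J = -p_{12}^{(k)}J$. This establishes all eight relations in~\eqref{projsJ} and~\eqref{transJ}.

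There is essentially no obstacle here; the result is a direct consequence of the bookkeeping identity~\eqref{Jrs} together with the explicit expressions for the projections. The only point requiring a moment's care is the sign in the transfer-isomorphism relations, where one must note that $J$ converts $(-e+s)$ into $-(e+s)$ and $(e+s)$ into $(e-s) = -(-e+s)$, so that the minus sign in front of $p_{21}^{(k)}$ (resp.\ $p_{12}^{(k)}$) appears. One may also remark, as a sanity check, that applying $J$ twice returns the original projection (since $J^2=-I$ as an operator on each fiber, and the sign is absorbed), which is consistent with the pairs of relations being mutually inverse.
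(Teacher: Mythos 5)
Your proof is correct and follows exactly the paper's (one-line) argument: the relations are derived directly from $Jr=rJ$, $Js=-sJ$ and the explicit expressions for the projections and transfer isomorphisms, with the sign bookkeeping handled via $J(e\pm s)=(e\mp s)J$. All eight computations check out; you have simply written out the details the paper leaves implicit.
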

\begin{proof} Equations~\eqref{projsJ} and~\eqref{transJ} follow directly from equations~\eqref{Jrs} and the expressions for the projections and transfer isomorphisms.
\end{proof}

\begin{cor} \label{Jinv} The direct sums of isotypic components $\vs^{(\tau)}\oplus\vs^{(\alpha)}$, $\vs^{(\phi)}\oplus\vs^{(\psi)}$ and the isotypic components $\vs^{(k)}=\vs_1^{(k)}\oplus \vs_2^{(k)}$, in the decomposition~\eqref{isotypical} are $J$-invariant subspaces for all $k$. Furthermore, the subspaces in each of these direct sums are interchanged by $J$:
\[ J\vs^{(\tau)}=\vs^{(\alpha)}, \ \ J\vs^{(\alpha)}=\vs^{(\tau)}, \ \ J\vs^{(\phi)}=\vs^{(\psi)}, \ \ J\vs^{(\psi)}=\vs^{(\phi)},\]
and
\[ J\vs_1^{(k)}=\vs_2^{(k)}, \ \ J\vs_2^{(k)}=\vs_1^{(k)}, \ \ \ \forall k.\]
\end{cor}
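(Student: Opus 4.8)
The plan is to deduce Corollary~\ref{Jinv} directly from the intertwining relations established in Proposition~\ref{comJ}, together with the fact that the isotypic summands are precisely the images of the projections $p^{(\varrho)}$ and $p_{ii}^{(k)}$. The essential observation is elementary: if $P$ and $Q$ are projections and $JP = QJ$, then $J$ maps $\text{Image}(P)$ onto $\text{Image}(Q)$. Indeed, for $v \in \text{Image}(P)$ we have $v = Pv$, hence $Jv = JPv = QJv \in \text{Image}(Q)$, so $J(\text{Image}(P)) \subseteq \text{Image}(Q)$; applying the symmetric relation $JQ = PJ$ gives the reverse inclusion after noting $J$ is invertible (it is $R(\pi/2)$ on each fiber, so $J^{2} = -\text{Id}$). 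Thus $J(\text{Image}(P)) = \text{Image}(Q)$.

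First I would apply this observation fiberwise to the four one-dimensional cases. From $Jp^{(\tau)} = p^{(\alpha)}J$ and $Jp^{(\alpha)} = p^{(\tau)}J$ in~\eqref{projsJ} we get $J\vs^{(\tau)} = \vs^{(\alpha)}$ and $J\vs^{(\alpha)} = \vs^{(\tau)}$; likewise $Jp^{(\phi)} = p^{(\psi)}J$ and $Jp^{(\psi)} = p^{(\phi)}J$ yield $J\vs^{(\phi)} = \vs^{(\psi)}$ and $J\vs^{(\psi)} = \vs^{(\phi)}$. Adding the two members of each pair, $J(\vs^{(\tau)} \oplus \vs^{(\alpha)}) = \vs^{(\alpha)} \oplus \vs^{(\tau)} = \vs^{(\tau)} \oplus \vs^{(\alpha)}$, so this sum is $J$-invariant, and similarly for $\vs^{(\phi)} \oplus \vs^{(\psi)}$. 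Next, for each $k = 1,\dots,t$, the relations $Jp_{11}^{(k)} = p_{22}^{(k)}J$ and $Jp_{22}^{(k)} = p_{11}^{(k)}J$ from~\eqref{transJ} give $J\vs_1^{(k)} = \vs_2^{(k)}$ and $J\vs_2^{(k)} = \vs_1^{(k)}$ by the same argument; hence $J\vs^{(k)} = J(\vs_1^{(k)} \oplus \vs_2^{(k)}) = \vs_2^{(k)} \oplus \vs_1^{(k)} = \vs^{(k)}$, so each isotypic component $\vs^{(k)}$ is $J$-invariant. This exhausts every claim in the statement.

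There is really no serious obstacle here; the proof is a short formal consequence of Proposition~\ref{comJ}. The only point requiring a moment's care is the passage from the intertwining of projections to the equality (not merely inclusion) of images, which rests on the invertibility of $J$; I would make that explicit by noting $J^2 = -I$ on $\sections$, as follows immediately from $(J\delta)(x) = R(\pi/2)\delta(x)$. One might also remark that the transfer-isomorphism relations $Jp_{12}^{(k)} = -p_{21}^{(k)}J$ and $Jp_{21}^{(k)} = -p_{12}^{(k)}J$ are consistent with, though not needed for, the stated conclusions: they encode how $J$ acts \emph{within} each $\vs^{(k)}$ relative to the chosen bases of $\vs_1^{(k)}$ and $\vs_2^{(k)}$, information that will be used later when block-diagonalizing $A$ and $J$ simultaneously, but which is superfluous for the invariance statements of the corollary itself.
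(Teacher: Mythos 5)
Your proof is correct and is exactly the deduction the paper intends: the corollary is stated as an immediate consequence of Proposition~\ref{comJ}, and you supply the only missing detail, namely that an intertwining relation $Jp=qJ$ between projections, together with the invertibility of $J$ (via $J^2=-I$), upgrades the inclusion $J(\mathrm{Image}(p))\subseteq\mathrm{Image}(q)$ to an equality. The only cosmetic slip is the phrase ``four one-dimensional cases'': the isotypic components $\vs^{(\tau)},\vs^{(\alpha)},\vs^{(\phi)},\vs^{(\psi)}$ have dimension $b+2c$ in general (it is the irreducible representations $\tau,\alpha,\phi,\psi$ that are one-dimensional), but your argument never uses the dimension, so nothing is affected.
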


\subsection{Symplectic Structures on ${\bm \sections}$}  Consider $\sections$ endowed with $\langle,\rangle_M$. Since $J^TM=-MJ$, we may use $J$ to endow $\sections$ with the symplectic form $\Omega_M$ given by
\begin{equation} \label{symplecticJ}
 \Omega_M(\delta,\epsilon)=\langle \delta,J\epsilon\rangle_M, \quad \delta,\epsilon \in \sections.
\end{equation}
Equations~\eqref{projsJ} and~\eqref{transJ} imply that the direct sums in the statement of corollary~\ref{Jinv} are Lagrangean decompositions with respect to any $\Omega_M$. For instance, $\vs^{(\tau)}=\text{Im}(p^{(\tau)})$ is $\Omega$-isotropic in $\vs^{(\tau)}\oplus\vs^{(\alpha)}$ since, from proposition~\ref{standardMprojtrans},
\[ \Omega_M(p^{(\tau)}(\delta),p^{(\tau)}(\epsilon))=\langle p^{(\tau)}(\delta),p^{(\alpha)}(J \epsilon)\rangle_M=0.\]
In an analogous manner, we verify that the restrictions of the transfer isomorphisms $p_{12}^{(k)}$ and $p_{21}^{(k)}$, $k=1,\cdots,t$, to each isotypic component $\vs^{(k)}$ are Hamiltonian operators. In the sequel, we will determine bases for $\vs^{(\tau)}\oplus\vs^{(\alpha)}$, $\vs^{(\phi)}\oplus\vs^{(\psi)}$ and $\vs^{(k)}$, $k=1,\cdots,t$ which are $M$-orthogonal and, if $M$-normalization is allowed, such bases may also be made $\Omega_{M}$-symplectic.


\section{Factorization of the Stability Polynomial of a Ring System with a Single Ring} \label{1orbit}

  Let $\varkappa \in \reals^{2N}$ be a $D_n$-symmetric configuration with a symmetric mass distribution, and $A$ be either the matrix $M^{-1}D\nabla U_{\gamma}(\varkappa)$ or the matrix $M^{-1}D\nabla H_{-1}(\varkappa)$. From equations~\eqref{secpol1} and ~\eqref{secpol2}, the stability polynomials of the $N$-body and $N$-vortex problems are expressed as the determinants of linear combinations of the matrices $A$, $J$ and the identity matrix.
We consider the operators on $\sections$ corresponding to $A$ and $J$ through the natural isomorphism $\reals^{2N}\simeq \sections$, keeping the notations $A,J$ for the respective operators on $\sections$. We adopt the basic strategy of Moeckel~\cite{m95}, and recast problem of factorizing $P_{\gamma}(\lambda)$, $P_{-1}(\lambda)$ as the problem of finding decompositions of $\sections$ into subspaces simultaneously invariant by $A$ and $J$ and having dimensions as small as possible. Notice a $J$-invariant subspace is necessarily even-dimensional.





 According to section~\ref{perisot}, $A$ can be block-diagonalized via the determination of a basis for $\sections$ which realizes the decomposition~\eqref{isotypical}. Recall the subspaces in~\eqref{isotypical} are images of the projections associated with $\sigma^E$. Proposition~\ref{comJ} describes how $J$ relates to the projections and transfer isomorphisms associated with $\sigma^E$. 

 


 We proceed to the block diagonalization of operators of the form $ A +\kappa J$, with $\kappa$ a scalar variable. Since all the projections and transfer isomorphisms associated with the representation $\sigma^E$, as well as the operator $J$, respect the orbit decomposition $\sections= \Gamma\left(\bigoplus_{i=1}^l T\reals^2\big|_{\mathcal{O}_i}\right)$, it is natural to start with the determination of bases which block diagonalize $A+\kappa J$ for each possible orbit type.  In the proof of proposition~\ref{rings}, we verified that individual orbits may consist of a single point, the vertices of a regular $n$-gon or the vertices of a semiregular $2n$-gon. For a singleton $X=\{O\}$, we have $\sections=\Gamma(T_O\reals^2)\cong \reals^ 2$, and $\sigma^E\cong\sigma$, the standard representation of $D_n$. Thus, when $X=\{O\}$, we can simply pick the basis of $\sections$ corresponding to the canonical basis of $\reals^2$. In the sequel, we consider the regular $n$-gons and the semiregular $2n$-gons.
 
\begin{rem} \label{simantisim} As noted in~\cite{m95}, $A^TM=MA$ and $J^TM=-MJ$ imply that, with respect to the inner product $\langle,\rangle_M$ defined in~\eqref{inner}, $A$ is symmetric and $J$ is antisymmetric. In particular, if $S$ is a subspace of $\sections$ which is both $A$ and $J$-invariant, then its $M$-orthogonal subspace $S^{\perp_M}=\{\epsilon\,/\,\langle \delta,\epsilon \rangle_M=0,\ \forall v\in S\}$ is also $A$ and $J$-invariant. When all masses have the same sign, $S^{\perp_M}$ is the $M$-orthogonal complement of $S$, and if $S$ is two-dimensional and contains an eigenvector $v$ of $A$, then $Jv$ is also an eigenvector of $A$. 
\end{rem}

\begin{rem} \label{nbody-nvortex} Suppose $\varkappa$ is a relative equilibrium of an $N$-body problem. The homogeneity of the corresponding potential function $U_{\gamma}$, together with the invariance of $U_{\gamma}$ with respect to rotations and translations, imply that $\varkappa$, $J\varkappa$, and displacements which translate $\varkappa$ in the horizontal and vertical directions are all eigenvectors of $A$, as direct calculations show~\cite{m95}. A similar remark holds for the $N$-vortex problem, see~\cite{gr13}, with the exclusion of the eigenvector $\varkappa$.
\end{rem}

\begin{rem} \label{D2esp}
 In this section and the next we will suppose that $n>2$. The case $n=2$ corresponds to arbitrary two-point sets $X \subset \reals^2$ and any such $X$ has symmetry group $D_2$. In section~\ref{sec10}, lemma~\ref{redstan}, we observe that the standard representation of $D_2$ is actually reducible, and this fact sets the case $n=2$ apart from the cases $n>2$.
\end{rem}

\subsection{Block diagonalization for regular $\bm{n}$-gons} \label{sec8.1}

  Let $X$ be the set of vertices of a regular $n$-gon. The corresponding subspaces $\vs^{(\tau)}$, $\vs^{(\alpha)}$, $\vs^{(\phi)}$, $\vs^{(\psi)}$ are all one-dimensional. Figure~\ref{f0} illustrates displacements which generate these subspaces. Notice each such displacement is an eigenvector of $A$, indeed of \emph{every} $\sigma^E$-equivariant operator on $\sections$.
We will denote the respective generators of $\vs^{(\tau)}$ and $\vs^{(\phi)}$ by $\varkappa$ and $\delta^{(\phi)}$. From corollary~\eqref{Jinv}, it is clear that $J\varkappa$ and $J \delta^{(\phi)}$ are generators of $\vs^{(\alpha)}$ and $\vs^{(\psi)}$, respectively. The matrix of an operator $ A +\kappa J$ with respect to the basis $\{J\varkappa,\varkappa,J\delta^{(\phi)},\delta^{(\phi)}\}$ has the form
\[ \left[\begin{array}{cccc}
         * & \kappa &&\\
         -\kappa& * &&\\
         & & * & \kappa\\
         && -\kappa & *
         \end{array}
   \right]
\]
where the $*$ substitute for the eigenvalues of $A$.  

\begin{figure}[h]
\begin{center}
\scalebox{.5}{\includegraphics{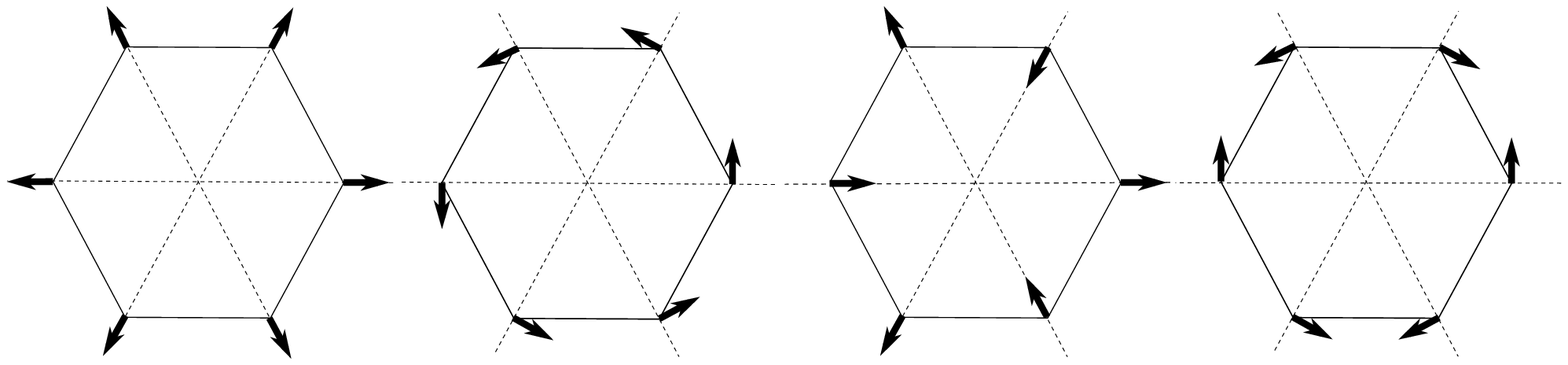}}
\caption{Basis displacements for $\vs^{(\tau)},\vs^{(\alpha)},\vs^{(\phi)},\vs^{(\psi)}$, respectively.}
\label{f0}
\end{center}
\end{figure}

 We briefly explain the construction of the displacements in figure~\ref{f0}. Let $x$ be the upper left vertex of the first hexagon. If $O$ is the center of the hexagon, consider the displacement $\delta_x$ which is the vector $n(x-O)$ at $x$ and the zero vector at the remaining vertices. Apply the projection $p^{(\tau)}$ to $\delta_x$. From equations~\eqref{ptau} and~\eqref{cyclic}, we have that
 \[ p^{(\tau)}(\delta_x)=c_0(r)(e+s)(\delta_x),\quad \text{where} \ \ c_0(r)=\frac{1}{2n}\sum_{j=1}^n r^j,\]
and $s$ is the reflection through the line generated by $x-O$. We calculate firstly $(e+s)(\delta_x)=2\delta_x$. Next we observe that applying $c_0(r)$ results in the sum of all the rotated images of $2 \delta_x$ and divided by $2n$. Thus we obtain a displacement which at vertex $r^jx$ is given by $r^jx-O$, $j=1,\cdots,n$. We find it convenient to denote such displacement by $\varkappa$, the symbol used to represent the configuration formed by the vertices of the $n$-gon. By applying the operators $p^{(\alpha)}J,p^{(\phi)}J$ and $p^{(\psi)}$ to $\delta_x$, keeping proposition~\ref{comJ} in mind, we obtain the last three displacements in figure~\ref{f0}.

 Let us now consider the isotypic components $\vs^{(k)}=\vs_1^{(k)}\oplus\vs_2^{(k)}$, $k=1,\cdots,t$, where henceforth $t=\frac{n}{2}-1$ if $n$ is even, or $t=\frac{n-1}{2}$ if $n$ is odd. Pick again one of the vertices of the $n$-gon, call it $x$, and let $s$ be the reflection through the line determined by $x$ and the center $O$ of the $n$-gon. Recall $\vs_i^{(k)}=\text{Image}(p_{ii}^{(k)})$, $i=1,2$, and let $\delta_x$ be the displacement defined in the previous paragraph. We compute the displacements
\[ \delta_1^{(k)}=p_{11}^{(k)}(\delta_x)=2 c_k(r)(e+s)(\delta_x), \quad \delta_2^{(k)}=p_{12}^{(k)}(J\delta_x)=2 s_k(r)(-e+s)(J\delta_x),\]
using the procedure explained in the preceding paragraph. Using that $p_{11}^{(k)}$ is an $M$-orthogonal projection with respect to the mass inner product~\eqref{inner} and identities~\eqref{algebra}, we obtain
\[ \langle \delta_1^{(k)},\delta_2^{(k)}\rangle_M=\langle \delta_x, \left(p_{11}^{(k)}\circ p_{12}^{(k)}\right)(J\delta_x)\rangle_M= \langle \delta_x,p_{12}^{(k)}(J\delta_x)\rangle_M=0. \]
Thus $\{\delta_1^{(k)},\delta_2^{(k)}\}$ is an $M$-orthogonal basis of $\vs_1^{(k)}$. We apply the transfer isomorphism $p_{21}^{(k)}$ in order to obtain the $M$-orthogonal basis of $\vs_2^{(k)}$ formed by the displacements
\[ \epsilon_1^{(k)}=p_{21}^{(k)}(\delta_1^{(k)})=p_{21}^{(k)}(\delta_x), \quad \epsilon_2^{(k)}=p_{21}^{(k)}(\delta_2^{(k)})=p_{22}^{(k)}(J\delta_x).\]
From remark~\ref{simantisim}, $A$ has a symmetric matrix representation with respect to any $M$-orthogonal basis. Let $a_1,a_2,a_3 \in \reals$ be such that
\begin{align*}
A\epsilon_2^{(k)}&=a_1 \epsilon_2^{(k)}+a_2 \epsilon_1^{(k)},\\
A \epsilon_1^{(k)}&= a_2 \epsilon_2^{(k)}+a_3 \epsilon_1^{(k)}.
\end{align*}
If we apply $p_{12}^{(k)}$ to both sides of the above equations, use~\eqref{algebra} and recall the definitions of $\delta_1^{(k)}, \delta_2^{(k)}$, we obtain
\begin{align*}
A \delta_1^{(k)}&= a_3 \delta_1^{(k)}+a_2 \delta_2^{(k)},\\
A\delta_2^{(k)}&=a_2 \delta_1^{(k)}+a_1 \delta_2^{(k)}.
\end{align*}
 Next we consider $J$. From the identities~\eqref{transJ} in proposition~\ref{comJ}, we have that
\begin{align*}
J\epsilon_2^{(k)}&=Jp_{22}^{(k)}(J\delta_x)=-p_{11}^{(k)}(\delta_x)=-\delta_1^{(k)},\\
J\epsilon_1^{(k)}&=Jp_{21}^{(k)}(\delta_x)=-p_{12}^{(k)}(J\delta_x)=-\delta_2^{(k)},\\
J\delta_1^{(k)}&=Jp_{11}^{(k)}(\delta_x)=p_{22}^{(k)}(J\delta_x)=\epsilon_2^{(k)},\\
J\delta_2^{(k)}&=Jp_{12}^{(k)}(J\delta_x)=p_{21}^{(k)}(\delta_x)=\epsilon_1^{(k)}.
\end{align*}
Therefore, with respect to the $M$-orthogonal basis $\{\epsilon_2^{(k)},\epsilon_1^{(k)},\delta_1^{(k)},\delta_2^{(k)}\}$, the matrix of $A+\kappa J$ has the form
\[ \left[\begin{array}{cccc}
          a_1 &a_2& \kappa & \\
          a_2 & a_3 & &\ \kappa\\
          -\kappa& & a_3 & a_2\\
           & -\kappa & a_2 & a_1
          \end{array}
   \right]
\]
This matrix is of the form found in~\cite{m95}, section 2, proposition 4. It is noteworthy that the block diagonalizations and related factorizations of the stability polynomial found in the literature can be explained, albeit not improved, by systematic usage of representation theory\footnote{In the case of~\cite{m95}, a close comparison between the expressions of our projections and transfer isomorphisms and the expression for $u \in \mathbb{C}^{2n}$ in lemma 1 of section 3 provides a reasonable justification.}. This is the case even for the pioneering work by J. C. Maxwell on the stability of the centered regular $n$-gon relative equilibria, cf. his essay~\cite{ma56}.

\subsubsection{Analysis of ${\vs^{(\sigma)}}$} \label{transdecI} It is possible to use translational symmetry to decompose $\vs^{(1)}=\vs^{(\sigma)}$ into invariant (and irreducible) subspaces so that $A+\kappa J$ assumes its simplest form. We observe that if $\delta_h$ and $\delta_v$ represent translations of $X$ in the horizontal and the vertical directions, respectively, then $\{\delta_h,\delta_v\}$ is a basis for a subrepresentation of $\sigma^E$ isomorphic to $\sigma$. Indeed, let $e_1=(1,0)$ and $e_2=(0,1)$  be the vectors in the canonical basis of $\reals^2$. Using the natural isomorphism between the fibers of $E$ and $\reals^2$, view $e_1$ and $e_2$ at each vertex of the $n$-gon $X$. Define the horizontal and vertical displacements of $X$ by
\[ \delta_h(x)=e_1, \quad \delta_v(x)=e_2, \qquad \forall x \in X.\]
From 
\[ \sigma^E(r^js^k)(\delta_h)(x)=r^js^ke_1, \quad \sigma^E(r^js^k)(\delta_v)(x)=r^js^ke_2, \]
for $j=1,\cdots,n, \ k=0,1, \ \forall x \in X $, we see that $\sigma^E(g)$ takes $\delta_h$ and $\delta_v$ to translations in different directions, for all $g \in D_n$. Clearly all such translations are linear combinations of $\delta_h$ and $\delta_v$, so $\delta_h$ and $\delta_v$ generate a $\sigma^E$-invariant subspace $\mathcal{T} \leq \sections$. From
\[ \sigma^E(r)(\delta_h)=r\delta_h, \quad \sigma^E(r)(\delta_v)=r \delta_v, \qquad \sigma^E(s)(\delta_h)=s\delta_h, \quad \sigma^E(s)(\delta_v)=s \delta_v,\]
it follows that $\sigma^E$ restricted to $\mathcal{T}=\text{span}\{\delta_h,\delta_v\}$ is given by $\sigma$. Since $\vs^{(\sigma)}$ contains all subrepresentations isomorphic to $\sigma$ (see theorem~\ref{maintheo}, (1)),  we have that $\mathcal{T} \leq \vs^{(\sigma)}$. We know $\vs^{(\sigma)}$ contains another copy of $\sigma$, and in order to produce it, we will determine $\mathcal{T}^{\perp_M}\leq \vs^{(\sigma)}$, the $M$-orthogonal complement of $\mathcal{T}$ within $\vs^{(\sigma)}$. Our first step is to find suitable expressions for $\delta_h$ and $\delta_v$.

\begin{prop} \label{transla}
For $i=1,2$, and $x \in X$ fixed, let
\[ \delta_{i,x}(\tilde{x})=\begin{cases}
                     e_i,& \text{if } \tilde{x}=x,\\
                     0,& \text{if } \tilde{x} \neq x.
                    \end{cases}
\]   
The horizontal and vertical displacements $\delta_h$ and $\delta_v$ are given by
\[ \delta_h=\frac{n}{2}\left[p_{11}^{(\sigma)}(\delta_{1,x})+p_{12}^{(\sigma)}(\delta_{2,x})\right], \qquad \delta_v=J \delta_h=\frac{n}{2}\left[p_{21}^{(\sigma)}(\delta_{1,x})+p_{22}^{(\sigma)}(\delta_{2,x})\right].
\]  
In particular, we have that $\delta_h \in \vs^{(\sigma)}_1$  and $\delta_v \in \vs_2^{(\sigma)}$.          
\end{prop}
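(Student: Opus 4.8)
The plan is to verify the asserted identities for $\delta_h$ by a direct computation with the projection and transfer operators for the isotypic component $\vs^{(\sigma)}=\vs^{(1)}$, and then to obtain the formula for $\delta_v$ by applying $J$ and invoking Proposition~\ref{comJ}. Since $\sigma=\varrho_1$ is a two-dimensional irreducible representation, the relevant operators are $p_{11}^{(1)}=2c_1(r)(e+s)$, $p_{12}^{(1)}=2s_1(r)(-e+s)$, $p_{21}^{(1)}=2s_1(r)(e+s)$, $p_{22}^{(1)}=2c_1(r)(e-s)$, where $c_1(r)=\frac{1}{2n}\sum_{j=1}^n\cos(2\pi j/n)\,r^j$ and $s_1(r)=\frac{1}{2n}\sum_{j=1}^n\sin(2\pi j/n)\,r^j$; here I take $s$ to be the reflection through the line $Ox$ as in subsection~\ref{sec8.1}, so that $s\cdot x=x$ and $s e_1=e_1$, $s e_2=-e_2$ for the canonical basis placed at $x$ (after possibly relabeling so the line $Ox$ is the first coordinate axis).

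The core step is to evaluate $p_{11}^{(1)}(\delta_{1,x})+p_{12}^{(1)}(\delta_{2,x})$ vertex by vertex. First, since $\delta_{1,x}$ is supported at $x$ with value $e_1$ and $s e_1=e_1$, we get $(e+s)(\delta_{1,x})=2\delta_{1,x}$; similarly $s e_2=-e_2$ gives $(-e+s)(\delta_{2,x})=-2\delta_{2,x}$. Hence
\[
p_{11}^{(1)}(\delta_{1,x})+p_{12}^{(1)}(\delta_{2,x})=4c_1(r)(\delta_{1,x})-4s_1(r)(\delta_{2,x}).
\]
Now apply $r^j$: it sends the vector $e_i$ at $x$ to the vector $\sigma(r^j)e_i$ at $r^jx$. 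Writing $\theta=2\pi/n$, the value of the above displacement at the vertex $r^jx$ is therefore
\[
\frac{4}{2n}\Bigl[\cos j\theta\,\sigma(r^j)e_1-\sin j\theta\,\sigma(r^j)e_2\Bigr]
=\frac{2}{n}\,\sigma(r^j)\!\left(\cos j\theta\,e_1-\sin j\theta\,e_2\right).
\]
The bracketed vector is exactly $\sigma(r^{-j})e_1$, since in coordinates $\sigma(r^{-j})=\left[\begin{smallmatrix}\cos j\theta&\sin j\theta\\-\sin j\theta&\cos j\theta\end{smallmatrix}\right]$ applied to $e_1$ gives $(\cos j\theta,-\sin j\theta)^T$. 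Thus the value at $r^jx$ equals $\frac{2}{n}\,\sigma(r^j)\sigma(r^{-j})e_1=\frac{2}{n}e_1=\frac{2}{n}\delta_h(r^jx)$, which is the required identity after multiplying by $n/2$. (One must check this covers every vertex of $X$, which holds because $X=\{r^jx\}_{j=1}^n$.)

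For $\delta_v$, note $\delta_v=J\delta_h$ since $(J\delta)(x)=R(\pi/2)\delta(x)$ and $R(\pi/2)e_1=e_2$ at every vertex. Applying $J$ to the formula for $\delta_h$ and using Proposition~\ref{comJ}, namely $Jp_{11}^{(1)}=p_{22}^{(1)}J$ and $Jp_{12}^{(1)}=-p_{21}^{(1)}J$, together with $J\delta_{1,x}=\delta_{2,x}$ and $J\delta_{2,x}=-\delta_{1,x}$ (again from $R(\pi/2)e_1=e_2$, $R(\pi/2)e_2=-e_1$), yields
\[
\delta_v=\frac{n}{2}\Bigl[p_{22}^{(1)}(\delta_{2,x})-p_{21}^{(1)}(-\delta_{1,x})\Bigr]
=\frac{n}{2}\Bigl[p_{21}^{(1)}(\delta_{1,x})+p_{22}^{(1)}(\delta_{2,x})\Bigr],
\]
as claimed. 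Finally, $\delta_h\in\vs_1^{(\sigma)}=\mathrm{Image}(p_{11}^{(1)})$ and $\delta_v\in\vs_2^{(\sigma)}=\mathrm{Image}(p_{22}^{(1)})$ follow from the identities \eqref{algebra}: applying $p_{11}^{(1)}$ to the displayed formula for $\delta_h$ reproduces $\delta_h$ (since $p_{11}^{(1)}p_{11}^{(1)}=p_{11}^{(1)}$ and $p_{11}^{(1)}p_{12}^{(1)}=p_{12}^{(1)}$), and symmetrically for $\delta_v$.

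The main obstacle I anticipate is purely bookkeeping: getting the conventions consistent, specifically the relation between the abstract action of $r^j$ on displacements and its matrix $\sigma(r^j)$ in the chosen coordinates, and making sure the reflection $s$ used in the projection formulas is the one fixing $x$ so that $(e\pm s)$ acts diagonally on $\delta_{1,x},\delta_{2,x}$. Once those conventions are pinned down, the identity $\cos j\theta\,e_1-\sin j\theta\,e_2=\sigma(r^{-j})e_1$ is the only nontrivial algebraic fact, and it is immediate from the matrix form of $\sigma(r)$. No representation-theoretic input beyond \eqref{algebra} and Proposition~\ref{comJ} is needed.
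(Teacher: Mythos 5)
Your proposal is correct and follows essentially the same route as the paper: fix $s$ as the reflection through $Ox$ with $e_1$ along that line so that $(e+s)\delta_{1,x}=2\delta_{1,x}$ and $(-e+s)\delta_{2,x}=-2\delta_{2,x}$, evaluate the sum $p_{11}^{(\sigma)}(\delta_{1,x})+p_{12}^{(\sigma)}(\delta_{2,x})$ at each vertex $r^jx$ to get $\tfrac{2}{n}e_1$, then obtain $\delta_v$ by applying $J$ via the relations of Proposition~\ref{comJ} and conclude membership in $\vs_1^{(\sigma)},\vs_2^{(\sigma)}$ from the identities~\eqref{algebra}. The only cosmetic difference is that you factor out $\sigma(r^j)$ and recognize $\cos j\theta\,e_1-\sin j\theta\,e_2=\sigma(r^{-j})e_1$, whereas the paper adds the two componentwise expressions directly.
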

\begin{proof}
 The reflection $s$ can be taken with respect to the line connecting the center $O$ of the $n$-gon to the vertex $x$. Without loss of generality, we may suppose $e_1$ is in the direction of that line. Thus $s\delta_{1,x}=\delta_{1,x}$ and $s\delta_{2,x}=-\delta_{2,x}$. Since $p_{11}^{(\sigma)}=p_{11}^{(1)}$ and $p_{12}^{(\sigma)}=p_{12}^{(1)}$, we have that
\[ p_{11}^{(\sigma)}=\frac{1}{n}\left(\sum_{j=1}^{n}\cos\frac{2 \pi j}{n}r^j\right)(e+s), \quad p_{12}^{(\sigma)}=\frac{1}{n}\left(\sum_{j=1}^{n}\sin\frac{2 \pi j}{n}r^j\right)(-e+s).\]
Hence we obtain, for each $j=1,\cdots,n$,
\[p_{11}^{(\sigma)}(\delta_{1,x})(r^jx)=\frac{2}{n}\left(\cos^2\frac{2 \pi j}{n}e_1+\cos \frac{2 \pi j}{n} \sin \frac{2 \pi j}{n} e_2\right),\]
and
\[ p_{12}^{(\sigma)}(\delta_{2,x})(r^jx)=\frac{2}{n}\left(\sin^2\frac{2 \pi j}{n}e_1-\cos \frac{2 \pi j}{n} \sin \frac{2 \pi j}{n} e_2\right). \]
After adding the two formulas above and multiplying the result by $n/2$, the expression for $\delta_h$ follows. 

 Since $\delta_v=J\delta_h$, using the identities~\eqref{transJ}, we deduce the expression for $\delta_v$ in the statement of the lemma. As long as $p_{11}^{(\sigma)}(\delta_h)=\delta_h$ and $\delta_v=p_{21}^{(\sigma)}(\delta_h)$, we have that $\delta_h \in \vs_1^{(\sigma)}$ and $\delta_v \in \vs_2^{(\sigma)}$. 
\end{proof}

\begin{prop} \label{translcomp} For $x \in X$ fixed, and $\delta_{1,x},\delta_{2,x}\in \sections$ as in proposition~\ref{transla}, let
\[ \epsilon_h=\frac{n}{2}\left[p_{11}^{(\sigma)}(\delta_{1,x})-p_{12}^{(\sigma)}(\delta_{2,x})\right], \qquad \epsilon_v= J \epsilon_h=\frac{n}{2}\left[p_{22}^{(\sigma)}(\delta_{2,x})-p_{21}^{(\sigma)}(\delta_{1,x})\right].\]
Then $\{\delta_h,\epsilon_h\}$ and $\{\delta_v,\epsilon_v\}$ are $M$-orthogonal bases of $\vs_1^{(\sigma)}$ and $\vs_2^{(\sigma)}$, respectively. In particular, $\epsilon_h$ and $\epsilon_v$ are eigenvectors of $A$.
\end{prop}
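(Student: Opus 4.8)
The plan is to show that $\{\delta_h,\epsilon_h\}$ spans $\vs_1^{(\sigma)}$ and is $M$-orthogonal, and then transport everything to $\vs_2^{(\sigma)}$ by applying $J$ (using that $J$ restricts to an isometry $\vs_1^{(\sigma)}\to\vs_2^{(\sigma)}$ by Corollary~\ref{Jinv} and Proposition~\ref{standardMprojtrans}). First I would record that $\dim\vs_1^{(\sigma)}=\mu_\sigma/1$ per component; more precisely, for a single regular $n$-gon the multiplicity of $\sigma$ in $\sigma^E$ coming from this orbit is $2$ (Theorem~\ref{standardE} gives one copy of $\regd$ per $n$-gon, and $\sigma=\varrho_1$ appears in $\regd$ with multiplicity $d_1=2$ by Remark~\ref{regrepequiv}), so $\dim\vs^{(\sigma)}=4$ and $\dim\vs_1^{(\sigma)}=\dim\vs_2^{(\sigma)}=2$. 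Hence it suffices to prove that $\delta_h$ and $\epsilon_h$ both lie in $\vs_1^{(\sigma)}$ and are $M$-orthogonal and nonzero; linear independence then follows and they form a basis.

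Next I would check membership in $\vs_1^{(\sigma)}$. By Proposition~\ref{transla} we already know $p_{11}^{(\sigma)}(\delta_h)=\delta_h$; the same computation, together with the algebra relations~\eqref{algebra} (which give $p_{11}^{(\sigma)}\circ p_{11}^{(\sigma)}=p_{11}^{(\sigma)}$ and $p_{11}^{(\sigma)}\circ p_{12}^{(\sigma)}=p_{11}^{(\sigma)}$, wait — actually $p_{11}^{(k)}\circ p_{12}^{(k)}=p_{12}^{(k)}$ only when the first lower index of $p_{12}$ matches, so one must use $p_{11}\circ p_{12}=p_{12}$ is false; rather $p_{11}^{(k)}\circ p_{1j}^{(k)}=p_{1j}^{(k)}$), shows $p_{11}^{(\sigma)}$ fixes $p_{11}^{(\sigma)}(\delta_{1,x})$ and sends $p_{12}^{(\sigma)}(\delta_{2,x})$ to itself since $p_{12}^{(\sigma)}(\delta_{2,x})\in\vs_1^{(\sigma)}$ (image of $p_{12}^{(\sigma)}$ is $\vs_1^{(\sigma)}$ by Theorem~\ref{maintheo}(2)). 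So $\epsilon_h\in\vs_1^{(\sigma)}$ as a difference of two vectors in $\vs_1^{(\sigma)}$.

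For $M$-orthogonality I would compute $\langle\delta_h,\epsilon_h\rangle_M$ using $\delta_h=\frac{n}{2}(u+w)$ and $\epsilon_h=\frac{n}{2}(u-w)$ with $u=p_{11}^{(\sigma)}(\delta_{1,x})$ and $w=p_{12}^{(\sigma)}(\delta_{2,x})$, so $\langle\delta_h,\epsilon_h\rangle_M=\frac{n^2}{4}(\|u\|_M^2-\|w\|_M^2)$. The claim thus reduces to $\|u\|_M^2=\|w\|_M^2$. Since $p_{11}^{(\sigma)}$ is an $M$-orthogonal projection (Proposition~\ref{standardMprojtrans}) and $p_{12}^{(\sigma)}$ restricted to $\vs_2^{(\sigma)}$ is an $M$-isometry, while $\delta_{1,x}$ and $\delta_{2,x}$ are unit-norm (after the chosen normalization of the mass inner product on the orbit) and $M$-orthogonal, one gets $\|u\|_M^2=\langle\delta_{1,x},p_{11}^{(\sigma)}\delta_{1,x}\rangle_M$ and $\|w\|_M^2=\langle p_{12}^{(\sigma)}\delta_{2,x},p_{12}^{(\sigma)}\delta_{2,x}\rangle_M=\langle\delta_{2,x},p_{22}^{(\sigma)}\delta_{2,x}\rangle_M$; these are equal because $\delta_{1,x}$ and $\delta_{2,x}$ are related by a rotation of the fiber at $x$ (the choice of $e_1,e_2$ along and perpendicular to the reflection axis), and the $p_{ii}^{(\sigma)}$ are intertwined appropriately. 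I would make this precise via the explicit fiberwise formulas already displayed in the proof of Proposition~\ref{transla}: $p_{11}^{(\sigma)}(\delta_{1,x})(r^jx)$ and $p_{22}^{(\sigma)}(\delta_{2,x})(r^jx)$ have the same Euclidean norm $\frac{2}{n}|\cos\frac{2\pi j}{n}|$ at each vertex, so summing with the (equal) masses over the orbit gives $\|u\|_M=\|w\|_M$. I expect the main obstacle to be bookkeeping the normalization constants and confirming $\epsilon_h\neq 0$ (equivalently $u\neq w$), which follows because $u$ and $w$ are supported with differing signs on the $e_2$-components — i.e. $p_{11}^{(\sigma)}(\delta_{1,x})$ and $p_{12}^{(\sigma)}(\delta_{2,x})$ are not equal, as their $e_2$-parts at $r^jx$ are $+\frac{2}{n}\cos\frac{2\pi j}{n}\sin\frac{2\pi j}{n}$ and $-\frac{2}{n}\cos\frac{2\pi j}{n}\sin\frac{2\pi j}{n}$. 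Finally the statements about $\vs_2^{(\sigma)}$ and about $\epsilon_h,\epsilon_v$ being eigenvectors of $A$ follow from Remark~\ref{simantisim}: $\{\delta_h,\epsilon_h\}$ is an $M$-orthogonal basis of the two-dimensional $A$-invariant space $\vs_1^{(\sigma)}$, and applying $J$ carries it to the $M$-orthogonal basis $\{\delta_v,\epsilon_v\}$ of $\vs_2^{(\sigma)}$; since $\delta_h,\delta_v$ are already eigenvectors of $A$ (Remark~\ref{nbody-nvortex}), the remark's conclusion that $M$-orthogonal complements within an $A$-invariant space are $A$-invariant forces $\epsilon_h,\epsilon_v$ to be eigenvectors of $A$ as well.
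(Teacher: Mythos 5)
Your proposal is correct and follows essentially the same route as the paper's proof: membership of $\epsilon_h$ in $\vs_1^{(\sigma)}$ via the projections and transfer isomorphisms, reduction of $\langle\delta_h,\epsilon_h\rangle_M=0$ to the equality of norms $\|p_{11}^{(\sigma)}(\delta_{1,x})\|_M^2=\|p_{12}^{(\sigma)}(\delta_{2,x})\|_M^2=\tfrac{2}{n}$ (up to the common mass factor), and the eigenvector claim from Remark~\ref{simantisim}. One small slip: your argument that $\epsilon_h\neq 0$ via the opposite signs of the $e_2$-components degenerates when $\cos\frac{2\pi j}{n}\sin\frac{2\pi j}{n}=0$ for every $j$ (e.g.\ $n=4$); it is cleaner to observe that $\|\epsilon_h\|_M^2=\frac{n^2}{4}\left(\|u\|_M^2+\|w\|_M^2\right)>0$, which follows directly from the $M$-orthogonality of $u=p_{11}^{(\sigma)}(\delta_{1,x})$ and $w=p_{12}^{(\sigma)}(\delta_{2,x})$ that you already established.
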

\begin{proof} From proposition~\ref{transla} we know that $\delta_h \in \vs_1^{(\sigma)}$ and $\delta_v \in \vs_2^{(\sigma)}$. By applying the projections $p_{11}^{(\sigma)}$ and $p_{22}^{(\sigma)}$ to $\epsilon_h$ and $\epsilon_v$, respectively, we conclude that $\epsilon_h \in \vs_1^{(\sigma)}$ and $\epsilon_v \in \vs_2^{(\sigma)}$. Notice that 
\[ \langle p_{11}^{(\sigma)}(\delta_{1,x}),p_{12}^{(\sigma)}(\delta_{2,x})\rangle_M=\langle \delta_{1,x},p_{12}^{(\sigma)}(\delta_{2,x})\rangle_M=0,\]
and 
\[ \langle p_{11}^{(\sigma)}(\delta_{1,x}),p_{11}^{(\sigma)}(\delta_{1,x})\rangle_M=\frac{2}{n}=\langle p_{12}^{(\sigma)}(\delta_{2,x}),p_{12}^{(\sigma)}(\delta_{2,x})\rangle_M.\]
Thus $\delta_h$ is $M$-orthogonal to $\epsilon_h$. Through similar calculations, we verify that $\delta_v$ is $M$-orthogonal to $\epsilon_v$. The proof is complete.
\end{proof}

 The form of the restriction of $A+\kappa J$ to $\vs^{(\sigma)}$ can now be determined. Consider the basis $\{\delta_v,\delta_h,\epsilon_v,\epsilon_h\}$ from propositions~\ref{transla} and~\ref{translcomp}. It is not hard to verify from direct calculations (starting from the invariance of the potential functions by translations) that the eigenvalues associated with $\delta_h$ and $\delta_v$ are equal to 0. Furthermore, $\{\epsilon_h,\epsilon_v\}$ is a basis of $\mathcal{T}^{\perp_M}$ formed by eigenvectors of $A$ according to proposition~\ref{translcomp}, and the restriction of $\sigma^E$ to $\mathcal{T}^{\perp_M}$ is irreducible, thus isomorphic to $\sigma$.
It follows that $\mathcal{T}^{\perp_M}$ is contained in a single eigenspace of $A$. Let $\widetilde{\lambda}$ be the eigenvalue of $A$ associated with  $\epsilon_h$ and $\epsilon_v$. Therefore the form of $A+\kappa J$ with respect to the basis $\{\delta_v,\delta_h,\epsilon_v,\epsilon_h\}$ is
\[ \left[\begin{array}{cccc}
          0 &\kappa&  & \\
          -\kappa & 0 & &\\
          & & \widetilde{\lambda} & \kappa\\
           &  & -\kappa & \widetilde{\lambda}
          \end{array}
   \right]
\]
\begin{rem} \label{AcommJ} The above results showed that, for $n$-gons, besides the eigenvectors  traditionally associated with the symmetries and homogeneity of the potential functions, there are additional eigenvectors of $A$ in $\vs^{(\phi)},\vs^{(\psi)}$ and $\vs^{(\sigma)}$. As a matter of fact, all eigenvectors of $A$ could be determined from its block-diagonal form. However, the key property that the image by $J$ of an eigenvector is also an eigenvector need not hold in general, so the matrix forms of operators $A+\kappa J$ with respect to a basis of eigenvectors of $A$ may not be so simple. Interestingly, however, as noted by Roberts~\cite{gr13}, in the $N$-vortex problem the image by $J$ of an eigenvector is always an eigenvector. So, for the $n$-gon relative equilibrium with equal vorticities, a full simplification of $A+\kappa J$ is achievable. In section~\ref{sec10}, we show that full factorizations can also be achieved for some $4$-body $D_2$-symmetric ring systems in the $N$-body and $N$-vortex problems.
\end{rem}

\subsection{Block diagonalization for semiregular $\bm{2n}$-gons} \label{sec8} For a semiregular $2n$-gon we have that $\sigma^E \cong 2\regd$, thus
\[ \dim \vs^{(\tau)}=\dim \vs^{(\alpha)}=\dim \vs^{(\phi)}=\dim \vs^{(\psi)}=2, \ \  \dim \vs^{(k)}=8,  \ \ \forall k=1,\cdots,t. \]

 In order to find a basis for $\vs^{(\tau)}$, we apply the projection $p^{(\tau)}$ to two $M$-orthogonal displacements. Let $x$ be a fixed vertex, $O$ be the center of the $2n$-gon, and let $\delta_x$ be the displacement given by the vector $2n(x-O)$ at $x$ and by the null vector at the remaining vertices. We assume $s$ is the reflection about the perpendicular bisector of a side of the $2n$-gon having $x$ as one of its endpoints. The displacements
\[ p^{(\tau)}(\delta_x)=\varkappa, \quad p^{(\tau)}(J\delta_x)=\varkappa'\]
form an $M$-orthogonal basis of the isotypic component $\vs^{(\tau)}$. From remarks~\ref{simantisim} and~\ref{nbody-nvortex}, since $\varkappa$ is an eigenvector of $A$, so is $\varkappa'$, but the respective eigenvalues need not be the same.

 Using the identities~\eqref{projsJ}, we obtain the $M$-orthogonal basis for $\vs^{(\alpha)}$ consisting of the displacements $J\varkappa$ and $J \varkappa'$.
Thus $\vs^{(\tau)}\oplus \vs^{(\alpha)}$ has an $M$-orthogonal basis $\{J\varkappa,\varkappa,J\varkappa',\varkappa'\}$ whose elements are eigenvectors of $A$, and with respect to which the operators $A+\kappa J$ have matrices of the form
\[ \left[\begin{array}{cccc}
          * &\kappa&  & \\
          -\kappa & * & &\\
          & & * & \kappa\\
           &  & -\kappa & *
          \end{array}
   \right].
\]
 
  In an analogous manner, we find an $M$-orthogonal basis for $\vs^{(\phi)}$, namely $\{p^{(\phi)}(\delta_x),$ $p^{(\phi)}(J \delta_x)\}$, and an $M$-orthogonal basis for $\vs^{(\psi)}$, viz. $\{p^{(\psi)}(J\delta_x),-p^{(\psi)}(\delta_x)\}$. The form of $A+\kappa J$ with respect to the basis $\{p^{(\psi)}(J\delta_x),-p^{(\psi)}(\delta_x),p^{(\phi)}(\delta_x),p^{(\phi)}(J \delta_x)\}$ is
\[ \left[\begin{array}{cccc}
          a_1 &a_2& \kappa & \\
          a_2 & a_3 & &\ \kappa\\
          -\kappa& & a_1' & a_2'\\
           & -\kappa & a_2' & a_3'
          \end{array}
   \right].
\]
Figure~\ref{fx} depicts some of the basic displacements. Notice that the primed and unprimed entries of the above matrix are unrelated due to the absence of a transfer isomorphism\footnote{Exceptionally, though, in the $N$-vortex problem, $J$ plays the role of a transfer isomorphism -- see remark~\ref{AcommJ}.}.

\begin{figure}[h]
\begin{center}
\scalebox{.6}{\includegraphics{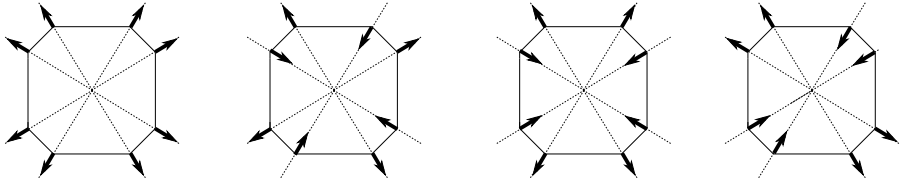}}
\caption{The images (up to scaling) of the displacement $\delta_x$ by the projections $p^{(\tau)}$, $p^{(\alpha)}$, $p^{(\phi)}$ and $p^{(\psi)}$.}
\label{fx}
\end{center}
\end{figure}

 Let us consider the isotypic components $\vs^{(k)}=\vs^{(k)}_1\oplus \vs^{(k)}_2$, $k=1,\cdots,t$. Let $\delta_{i,x}$ be the displacements defined in proposition~\ref{transla}. The four displacements
\[ p_{11}^{(k)}(\delta_{i,x}), \  p_{12}^{(k)}(\delta_{i,x}),\qquad i=1,2\] 
are pairwise $M$-orthogonal, as we can verify using proposition~\ref{standardMprojtrans}. Indeed, we have that
\[ \langle p_{11}^{(k)}(\delta_{i,x}),p_{12}^{(k)}(\delta_{i,x})\rangle_M=\langle \delta_{i,x},p_{12}^{(k)}(\delta_{i,x})\rangle_M=0, \qquad i=1,2,\]
and
\[ \langle p_{12}^{(k)}(\delta_{1,x}),p_{12}^{(k)}(\delta_{2,x})\rangle_M=\langle p_{11}^{(k)}(\delta_{1,x}),p_{11}^{(k)}(\delta_{2,x})\rangle_M=0.\]
Thus $\{p_{11}^{(k)}(\delta_{1,x}),p_{12}^{(k)}(\delta_{2,x}), p_{11}^{(k)}(\delta_{2,x}),p_{12}^{(k)}(\delta_{1,x})\}$ is an $M$-orthogonal basis of $\vs_1^{(k)}$. An $M$-orthogonal basis for $\vs_2^{(k)}$ can be produced by applying $J$ to each element of the basis of $\vs_1^{(k)}$:
\begin{align*}
Jp_{11}^{(k)}(\delta_{1,x})&=p_{22}^{(k)}(\delta_{2,x}), \quad Jp_{11}^{(k)}(\delta_{2,x})=-p_{22}^{(k)}(\delta_{1,x}),\\
Jp_{12}^{(k)}(\delta_{2,x})&=p_{21}^{(k)}(\delta_{1,x}), \quad Jp_{12}^{(k)}(\delta_{1,x})=-p_{21}^{(k)}(\delta_{2,x}),
\end{align*}
where we have used identities~\eqref{transJ} and that $\delta_{2,x}=J\delta_{1,x}$. 

 In order to determine the matrix forms of operators $A+\kappa J$, we must apply $p_{21}^{(k)}$ to the elements of the basis of $\vs_1^{(k)}$. The resulting displacements are, respectively: $p_{21}^{(k)}(\delta_{1,x}),p_{22}^{(k)}(\delta_{2,x}),p_{21}^{(k)}(\delta_{2,x})$ and $p_{22}^{(k)}(\delta_{1,x})$.
Therefore the matrices of operators $A+\kappa J$ with respect to the basis
\begin{align*}
 \{p_{22}^{(k)}(\delta_{2,x}),p_{21}^{(k)}(\delta_{1,x}), p_{22}^{(k)}(\delta_{1,x}),-p_{21}^{(k)}(\delta_{2,x}),\\
  p_{11}^{(k)}(\delta_{1,x}),p_{12}^{(k)}(\delta_{2,x}), -p_{11}^{(k)}(\delta_{2,x}),p_{12}^{(k)}(\delta_{1,x})\}
\end{align*}
have the form
\[ \left[\begin{array}{cccccccc}
         a_1&a_2&a_3&a_4&\kappa & & &\\
         a_2 & a_5 & a_6 & a_7 & & \kappa & & \\
         a_3 & a_6 & a_{8} & a_{9} & & & \kappa & \\
         a_4 & a_7 & a_{9} & a_{10}& & & &\kappa\\
         -\kappa & & & & a_5 & a_2 & a_7 & a_6\\
         &-\kappa & & & a_2 & a_1 & a_4 &a_3\\
         & & -\kappa & & a_7&a_4 & a_{10}&a_9\\
         & & & -\kappa & a_6 & a_3&a_9&a_8
         \end{array}
   \right]
\]

\subsubsection{Analysis of ${\vs^{(\sigma)}}$} \label{transdecII} As in the case of a regular $n$-gon, the isotypic component $\vs^{(\sigma)}=\vs_1^{(\sigma)}\oplus \vs_2^{(\sigma)}$ for a semiregular $2n$-gon $X$ can be given a finer decomposition $\mathcal{T}\oplus \mathcal{T}^{\perp_M}$, where $\mathcal{T}$ is the subspace formed by the translations of $X$.  It can be verified that the restrictions of $\sigma^E$ to $\mathcal{T}$ and $\mathcal{T}^{\perp_M}$ are isomorphic to $\sigma$ and $3 \sigma$, respectively. We will show that $\mathcal{T}\cap \vs_i^{(\sigma)}\neq \{0\}$, $i=1,2$, so it makes sense to find suitable $M$-orthogonal complements within $\vs_1^{(\sigma)}$ and $\vs_2^{(\sigma)}$.

 Let $x$ and $x'=sx$ be neighboring vertices; $s$ is the reflection through the perpendicular bisector of the side $xx'$. Consider the displacements $\delta_{i,x}$, $\delta_{i,x'}$, $i=1,2$ as defined in proposition~\ref{transla}, and assume $i=1$ corresponds to the direction of the line fixed by $s$. Hence $s\delta_{1,x}=\delta_{1,x'}$ and $s\delta_{2,x}=-\delta_{2,x'}$. Calculations similar to the ones performed in the proof of proposition~\ref{transla} lead us to the formulas in the next proposition.
 
\begin{prop} \label{transsemi} The horizontal and vertical translations of a semiregular $2n$-gon are respectively generated by the following displacements:
\[ \delta_h=n\left[p_{11}^{(\sigma)}(\delta_{1,x})+p_{12}^{(\sigma)}(\delta_{2,x})\right], \qquad \delta_v=n\left[p_{21}^{(\sigma)}(\delta_{1,x})+p_{22}^{(\sigma)}(\delta_{2,x})\right].
\]
\end{prop}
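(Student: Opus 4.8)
The plan is to repeat the direct computation behind Proposition~\ref{transla}, the only new feature being that a semiregular $2n$-gon is the union of \emph{two} $D_n$-orbits, that of $x$ and that of the neighboring vertex $x'=sx$, rather than a single one; accordingly the candidate formulas must be shown to yield a constant displacement on all $2n$ vertices.

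First I would recall from subsection~\ref{sec6} that $p_{11}^{(\sigma)}=p_{11}^{(1)}=2c_1(r)(e+s)$ and $p_{12}^{(\sigma)}=p_{12}^{(1)}=2s_1(r)(-e+s)$, with $c_1(r),s_1(r)$ as in~\eqref{cyclic}, and I would fix coordinates so that $e_1$ points along the line fixed by $s$; this is harmless since the translations span a copy of $\sigma$ inside $\sigma^E$ and a rotation of coordinates only rotates $e_1$. Then $se_1=e_1$, $se_2=-e_2$, and evaluating $(s\delta_{1,x})(\widetilde{x})=s\cdot\delta_{1,x}(s\widetilde{x})$ gives $(e+s)\delta_{1,x}=\delta_{1,x}+\delta_{1,x'}$ and, in the same way, $(-e+s)\delta_{2,x}=-\delta_{2,x}-\delta_{2,x'}$.

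The heart of the matter is to push these displacements through the averaging operators $c_1(r)$ and $s_1(r)$ and evaluate at an arbitrary vertex. Since $r^j\delta_{i,x}$ is supported at $r^j x$ with value $\sigma(r^j)e_i$, and $r^j\delta_{i,x'}$ is supported at $r^j x'$ with the same value, each sum collapses to a single term at every vertex, and with $\theta=2\pi/n$ one gets the common values
\[ p_{11}^{(\sigma)}(\delta_{1,x})(r^m x)=p_{11}^{(\sigma)}(\delta_{1,x})(r^m x')=\frac{1}{n}\big(\cos^2 m\theta,\ \cos m\theta\,\sin m\theta\big), \]
\[ p_{12}^{(\sigma)}(\delta_{2,x})(r^m x)=p_{12}^{(\sigma)}(\delta_{2,x})(r^m x')=\frac{1}{n}\big(\sin^2 m\theta,\ -\sin m\theta\,\cos m\theta\big). \]
Adding these two and using $\cos^2+\sin^2=1$ reduces the result to the constant value $\frac{1}{n}e_1$ at each of the $2n$ vertices, i.e.\ $p_{11}^{(\sigma)}(\delta_{1,x})+p_{12}^{(\sigma)}(\delta_{2,x})=\frac{1}{n}\delta_h$, which after clearing the factor $n$ is the first asserted formula.

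Finally, since $\delta_v=J\delta_h$, I would apply $J$ to the identity just obtained and use the commutation relations~\eqref{transJ}, namely $Jp_{11}^{(\sigma)}=p_{22}^{(\sigma)}J$ and $Jp_{12}^{(\sigma)}=-p_{21}^{(\sigma)}J$, together with $J\delta_{1,x}=\delta_{2,x}$ and $J\delta_{2,x}=-\delta_{1,x}$; this yields at once $\delta_v=n\big[p_{21}^{(\sigma)}(\delta_{1,x})+p_{22}^{(\sigma)}(\delta_{2,x})\big]$. I do not expect a real obstacle: this is the routine two-orbit version of the $n$-gon computation, and the only point that genuinely requires care is tracking which of the two orbits each rotated copy $r^j\delta_{i,x}$, $r^j\delta_{i,x'}$ falls on, so that the collapse to a constant displacement is confirmed on all $2n$ vertices rather than on $n$ of them --- which is exactly where the factor $n$ here, as opposed to the $n/2$ of Proposition~\ref{transla}, originates.
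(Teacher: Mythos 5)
Your proposal is correct and is exactly the computation the paper has in mind: the text proves Proposition~\ref{transsemi} only by remarking that "calculations similar to the ones performed in the proof of proposition~\ref{transla}" give the formulas, and your argument carries out precisely that two-orbit version, correctly identifying $s\delta_{1,x}=\delta_{1,x'}$, $s\delta_{2,x}=-\delta_{2,x'}$ and the resulting change of normalization from $n/2$ to $n$, with the $\delta_v$ formula obtained from $\delta_v=J\delta_h$ via the relations~\eqref{transJ} just as in the paper.
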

In addition, as in proposition~\ref{translcomp}, let
\[ \epsilon_h=n\left[p_{11}^{(\sigma)}(\delta_{1,x})-p_{12}^{(\sigma)}(\delta_{2,x})\right], \qquad \epsilon_v=n\left[p_{22}^{(\sigma)}(\delta_{2,x})-p_{21}^{(\sigma)}(\delta_{1,x})\right].\]
We have that $\delta_h,\epsilon_h \in \vs_1^{(\sigma)}$, $\delta_v,\epsilon_v \in \vs_2^{(\sigma)}$, and
\[ \delta_v=J \delta_h =p_{21}^{(\sigma)}(\delta_h), \qquad \epsilon_v=J \epsilon_h =p_{21}^{(\sigma)}(\epsilon_h).\]

 In order to obtain $M$-orthogonal bases for $\vs_1^{(\sigma)}$ and $\vs_2^{(\sigma)}$, we consider the displacements $\delta_1=p_{11}^{(\sigma)}(\delta_{2,x})$ and $\epsilon_1=p_{12}^{(\sigma)}(\delta_{1,x})$, which belong to $\vs_1^{(\sigma)}$. Straightforward computations show that $\delta_1$ and $\epsilon_1$ are $M$-orthogonal, and that $\delta_1,\epsilon_1 \in \{\delta_h,\epsilon_h\}^{\perp_M} \leq \vs_1^{(\sigma)}$. For instance, we have that
\begin{align*}
 \langle \epsilon_1,\epsilon_h \rangle_M= \langle \delta_{1,x},p_{21}^{(\sigma)}(\epsilon_h)\rangle=\langle\delta_{1,x},\epsilon_v\rangle_M=0,
\end{align*}
since
\begin{align*}
\epsilon_v(x)&=n\left[p_{22}^{(\sigma)}(\delta_{2,x})(x)-p_{21}^{(\sigma)}(\delta_{1,x})(x)\right]\\
             &= n\left[ \big(2c_1(r)(e-s)(\delta_{2,x})\big)(x)-\big(2s_1(r)(e+s)(\delta_{1,x})\big)(x)\right]\\
             &= \delta_{2,x}.
\end{align*}
We conclude that $\{\delta_h,\epsilon_h,\delta_1,\epsilon_1\}$ is an $M$-orthogonal basis of $\vs_1^{(\sigma)}$. Define
\[ \delta_2=J \delta_1, \qquad \epsilon_2 = J\epsilon_1.\]
Then $\{\delta_v,\epsilon_v,\delta_2,\epsilon_2\}$ is an $M$-orthogonal basis of $\vs_2^{(\sigma)}$, and we have that
\[ p_{12}^{(\sigma)}(\epsilon_v)=-\epsilon_h, \quad p_{12}^{(\sigma)}(\delta_2)=-\epsilon_1, \quad \text{and} \quad p_{12}^{(\sigma)}(\epsilon_2)=-\delta_1 .\]

 Recall $\{\delta_h,\delta_v\} \leq \mathcal{T} \leq \text{Ker}(A)$. Together with the results of the previous paragraph, we deduce that the possible forms of operators $A+\kappa J$ with respect to the basis of $\vs^{(\sigma)}$ given by $\{\delta_v,\delta_h,\epsilon_v,\delta_2,\epsilon_2,\epsilon_h,\delta_1,\epsilon_1\}$ are
\[ \left[\begin{array}{cccccccc}
   0&\kappa& & & & & & \\
   -\kappa & 0 &&&&& \\
   &&a_1 & a_2 & a_3&\kappa & & \\
   &&a_2 & a_4 & a_5 & &\kappa & \\
   && a_3 & a_5 & a_6 & & & \kappa \\
   && -\kappa & & & -a_1 & -a_2 &- a_3 \\
   && & -\kappa && -a_2&-a_6 &-a_5 \\
   &&&& -\kappa &-a_3&-a_5&-a_4
   \end{array}
   \right].
\]

\section{Factorization of the Stability Polynomial of a Ring System with Multiple Rings} \label{sec9}

 As in section~\ref{1orbit}, we suppose $n>2$. See remark~\ref{D2esp}.
 
 Let $X$ be a ring system and $X=\cup_{i=1}^l \mathcal{O}_i$ be its $D_n$-orbit decomposition.
Recall proposition~\ref{rings} on the structure of ring systems. We say a ring system $X$ has type $(a,b,c)$ if $X$ has $a=1$ or $0$ points at its barycenter, and $X$ contains precisely $b$ regular $n$-gons and $c$ semiregular $2n$-gons. 
From the fact that $J$ and the projections and transfer isomorphisms associated with $\sigma^E$ respect the decomposition $\sections=\oplus_{i=1}^l\Gamma\left(T\reals^2\big|_{\mathcal{O}_i}\right)$, the block diagonalization of $A$ can be accomplished by using the natural embeddings of the space of displacements of each individual orbit, $\Gamma\left(T\reals^2\big|_{\mathcal{O}_i}\right)$, into $\sections$. We employ the results of section~\ref{1orbit} to find the suitable $M$-orthogonal bases for each $\Gamma\left(T\reals^2\big|_{\mathcal{O}_i}\right)$, apply the corresponding embedding and take the union of the images, thus producing an $M$-orthogonal basis of $\sections$ which block-diagonalizes operators of the form $A+\kappa J$. A simple reordering of the elements of the resulting basis of $\sections$ puts each block of $J$ in standard form. This procedure provides the block diagonalization based on the $D_n$-symmetry of $X$. In addition, by making use of the eigenvectors associated with the symmetries and homogeneity of the potential functions $U_{\gamma}$, and the symmetries of $H_{-1}$, refinements can be obtained for the blocks corresponding to the sum of isotypic components $\vs^{(\tau)}\oplus\vs^{(\alpha)}$, and for the blocks corresponding to the isotypic component $\vs^{(\sigma)}$. In the next two subsections we present a method for obtaining such refinements.

\subsection{Analysis of $\bm{\vs^{(\tau)}\oplus\vs^{(\alpha)}}$} \label{dilationrotation} Consider a ring system $X$. We express the displacement corresponding to $\varkappa$ through the equation
\[ \varkappa= p^{(\tau)}(\delta_{x_1})+\cdots+p^{(\tau)}(\delta_{x_l}),\]
where $x_i \in \mathcal{O}_i$, $i=1,\cdots,l$, and $\delta_{x_i}$ is the displacement of $X$ that is equal to the null displacement if $\mathcal{O}_i=\{O\}$, or equal to the vector $n(x_i-O)$ at $x_i$ and the zero vector at all $x \in X \setminus \{x_i\}$ if $\mathcal{O}_i$ is an $n$-gon, or equal to $2n(x_i-O)$ at $x_i$ and the zero vector at all $x \in X \setminus \{x_i\}$ if $\mathcal{O}_i$ is a semiregular $2n$-gon. 

 If the ring system is of type $(0,b,c)$, we may be able to determine an $M$-orthogonal basis for $\vs^{(\tau)}$ as follows. Pick one of the orbits and call it $\mathcal{O}_1$. For each $i\in \{2,\cdots,l\}$, let
\[ \varkappa_i=p^{(\tau)}(\delta_{x_1})+c_i p^{(\tau)}(\delta_{x_i}),\]
where each $c_i$ is a nonzero constant
chosen so that the orthogonality identities
\[ \langle \varkappa, \varkappa_i\rangle_M=0, \qquad i=2,\cdots,l\]
are verified. More explicitly, if $R_i$ is the radius of the circle circumscribing the orbit $\mathcal{O}_i$ and $m_i$ is the mass of the bodies in $\mathcal{O}_i$, we have that
\[ \langle p^{(\tau)}(\delta_{x_i}),p^{(\tau)}(\delta_{x_i})\rangle_M=m_iR_i^2 |\mathcal{O}_i|, \qquad i=1,\cdots,l, \]
hence
\[ c_i=-\frac{m_1 R_1^2 |\mathcal{O}_1|}{m_i R_i^2 |\mathcal{O}_i|}, \qquad i=2,\cdots,l.\]
Finally, for each semiregular $2n$-gon $\mathcal{O}_j$ in $X$, we add the displacement 
\[\varkappa_j'=p^{(\tau)}(J\delta_{x_j}).\]
It is clear that 
\[ \langle \varkappa_j',\varkappa_j'\rangle_M\neq 0, \ \ \langle \varkappa,\varkappa_j'\rangle_M=\langle \varkappa_i,\varkappa_j'\rangle_M=0, \qquad \forall i,j,\]
albeit the $\varkappa_i$ may neither be pairwise $M$-orthogonal nor have nonzero $M$-norm (if $c_i=-1$). In general, though, the displacements $\varkappa,\varkappa_i,\varkappa_j'$ do form a basis of $\vs^{(\tau)}$ which may be $M$-orthogonalized (via the Gram-Schmidt process) depending on the values of the masses on each orbit. In the special case of two regular $n$-gons, the basis $\{\varkappa,\varkappa_2\}$ of $\vs^{(\tau)}$ is $M$-orthogonal and, from remarks~\ref{simantisim} and~\ref{nbody-nvortex}, $\varkappa$ and $\varkappa_2$ are eigenvectors of $A$.

 As it was done previously, a basis for $\vs^{(\alpha)}$ can be obtained by simply applying $J$ to the basis of $\vs^{(\tau)}$. This basis includes the eigenvector $J \varkappa$. If we take the union of the bases of $\vs^{(\tau)}$ and  $\vs^{(\alpha)}$ so that the pair $J\varkappa,  \varkappa$ appears firstly, followed by  the sequence $J \varkappa_2, \cdots,J \varkappa_l,J\varkappa_1',\cdots,J\varkappa_c'$, and subsequently by $\varkappa_2,\cdots,\varkappa_l,\varkappa_1',\cdots,\varkappa_c'$, the matrices of $A+\kappa J$ will have a block-diagonal form consisting of one $2 \times 2$ block followed by one $2(l-1+c)\times 2(l-1+c)$ block. Within each of these two blocks, $A$ has a block-diagonal structure formed by two blocks of size $1\times 1$ and two blocks of size $(l-1+c)\times (l-1+c)$, respectively, and $J$ is in standard form. 
 
 If $X$ contains its barycenter $O$, i.e., $X$ is a ring system of type $(1,b,c)$, then all the above remains valid as long as we replace $X$ with $X\setminus\{O\}$ and $l$ with $l-1$. The isotypic components $\vs^{(\tau)}$ and $\vs^{(\alpha)}$ in $\Gamma\left(T\reals^2\big|_{X \setminus \{O\}}\right)$ embed naturally into $\sections$ since $\sigma^E\Big|_{\Gamma\left(T\reals^2\big|_{\{O\}}\right)}\simeq \sigma$.

\subsection{Analysis of $\bm{\vs^{(\sigma)}}$} \label{transl}  The displacements corresponding to horizontal and vertical translations of $X$ are respectively multiples of
\[ \Delta_{h}(x)=e_1,\quad \Delta_v(x)=e_2,\qquad \forall x \in X.\]
Let $\ell$ be the line fixed by the reflection $s$, and suppose $e_1$ is parallel to $\ell$. Pick one vertex $x_i$ from each regular $n$-gon in $X$, one vertex $x_j'$ from each semiregular $2n$-gon in $X$ and, using the notation of proposition~\ref{transla}, form the displacements\footnote{If $n$ is even, a more careful discussion should separate the regular $n$-gons in $X$ in two families: the ones for which the line $\ell$ goes through a vertex and those for which $\ell$ does not go through a vertex. A short calculation shows that translations of either family are described by the same formula.}
\[\Delta_k=a\delta_{k,O}+n\left(\sum_{i=1}^b \delta_{k,x_i}\right)+2n\left(\sum_{j=1}^c \delta_{k,x_j'}\right), \quad k=1,2.\]
From the formulas in propositions~\ref{transla} and~\ref{transsemi}, we have
\[ \Delta_h=\frac{1}{2}\left[p_{11}^{(\sigma)}\left(\Delta_1\right)+p_{12}^{(\sigma)}(\Delta_2)\right], \qquad \Delta_v=\frac{1}{2}\left[p_{21}^{(\sigma)}(\Delta_{1})+p_{22}^{(\sigma)}(\Delta_{2})\right]. \]
It is clear that $\Delta_h \in \vs_1^{(\sigma)}$, $\Delta_v \in \vs_2^{(\sigma)}$, and  $\Delta_v=J\Delta_h$.

 We wish to construct an $M$-orthogonal basis for $\vs^{(\sigma)}$ which contains $\Delta_h$ and $\Delta_v$. For each $i=1,\cdots,l$, let $\delta_{h}^{(i)}$ and $\delta_v^{(i)}$ denote the horizontal and vertical translations of the orbit $\mathcal{O}_i$, respectively, viewed as displacements of $X$. The following identities hold
\[ \Delta_h=\delta_h^{(1)}+\cdots+\delta_h^{(l)}, \qquad \Delta_v=\delta_v^{(1)}+\cdots+\delta_v^{(l)},\]
and each summand in either identity is $M$-orthogonal to all the remaining summands in both equations. Besides, if $m_i$ is the mass of the particles in the orbit $\mathcal{O}_i$, we have that
\[ \langle \delta_h^{(i)},\delta_h^{(i)}\rangle_M=\langle \delta_v^{(i)},\delta_v^{(i)}\rangle_M=m_i|\mathcal{O}_i|.\]
As in subsection~\ref{dilationrotation}, define the displacements
\begin{equation} \label{ihorver}
 \Delta_h^{(i)}= \delta_h^{(1)}+c_i \delta_h^{(i)}, \qquad \Delta_v^{(i)}= J \Delta_h^{(i)}, \qquad i=2,\cdots,l,
\end{equation}
where the $c_i$ are constants selected in order to validate the orthogonality conditions:
\[ \langle \Delta_h, \Delta_h^{(i)}\rangle_M=0, \qquad i=2,\cdots,l. \]

 In order to obtain a suitable basis for $\vs^{(\sigma)}$, we consider each orbit $\mathcal{O}_i$ in $X$ separately. If $\mathcal{O}_i$ is the barycenter of $X$, we form the $M$-orthogonal set of displacements:
\[  \mathfrak{T}_i=\{\Delta_v^{(i)},\Delta_h^{(i)}\}, \]
and if $\mathcal{O}_i$ is a regular $n$-gon, we form the $M$-orthogonal set
\[ \mathfrak{T}_i=\{\Delta_v^{(i)},\epsilon_v^{(i)},\Delta_h^{(i)},\epsilon_h^{(i)}\},\]
where $\epsilon_h^{(i)},\epsilon_v^{(i)}$ are the natural embeddings into $\sections$ of the displacements $\epsilon_h,\epsilon_v$ from proposition~\ref{translcomp}. Finally, if $\mathcal{O}_i$ is a semiregular $2n$-gon, then we form the $M$-orthogonal set
\[ \mathfrak{T}_i=\{\Delta_v^{(i)},\epsilon_v^{(i)},\delta_2^{(i)},\epsilon_2^{(i)},\Delta_h^{(i)},\epsilon_h^{(i)},\delta_1^{(i)},\epsilon_1^{(i)} \}, \]
where $\epsilon_h^{(i)},\delta_1^{(i)},\epsilon_1^{(i)},\epsilon_v^{(i)},\delta_2^{(i)},\epsilon_2^{(i)}$ are the natural embeddings of the displacements $\epsilon_h,\delta_1,\epsilon_1,\epsilon_v,\delta_2,\epsilon_2$ from~\ref{transdecII}. Next, for $i=1$, we replace $\Delta_h^{(i)},\Delta_v^{(i)}$ with $\Delta_h, \Delta_v$, and if $i>1$, we use the displacements in~\eqref{ihorver}. Taking the union of the sets $\mathfrak{T}_i$, $i=1,\cdots,l$, and reordering its elements conveniently, we obtain a basis for $\vs^{(\sigma)}$ such that operators $A+\kappa J$ have a block-diagonal form with a $2 \times 2$ block (with zeros on the main diagonal) followed by a $2(a+2b+4c-1)\times 2(a+2b+4c-1)$ block. Within each of these two blocks, $A$ has a block-diagonal structure formed by two blocks of size $1\times 1$ and two blocks of size $(a+2b+4c-1)\times (a+2b+4c-1)$, respectively, and $J$ is in standard form.

\begin{rem} \label{g-s}
 The basis for $\vs^{(\sigma)}$ constructed above is not $M$-orthogonal. In order to obtain an $M$-orthogonal basis (if mass values allow) we apply the Gram-Schmidt process to the displacements $\Delta_h^{(2)},\cdots,\Delta_h^{(l)}$ and collect the resulting displacements $\widetilde{\Delta}_h^{(2)},\cdots,\widetilde{\Delta}_h^{(l)}$. Notice all tilded displacements are still in $\vs_1^{(\sigma)}$. We then apply $J$ to each $\widetilde{\Delta}_h^{(i)}$ in order to obtain the corresponding $M$-orthogonal displacements $\widetilde{\Delta}_v^{(2)},\cdots,\widetilde{\Delta}_v^{(l)}$ in $\vs_2^{(\sigma)}$.
\end{rem} 

\begin{exam} Suppose $X \subset \reals^2$ is $D_n$-symmetric and consists of its barycenter $O$ with mass $m_1$ and two regular $n$-gons whose vertices have masses $m_2$ and $m_3$, respectively. Let $\mathcal{O}_1=\{O\}$. We have that
\[ \langle \delta_h^{(1)},\delta_h^{(1)}\rangle_M=m_1, \quad \langle \delta_h^{(2)},\delta_h^{(2)}\rangle_M=n m_2, \quad \langle \delta_h^{(3)},\delta_h^{(3)} \rangle_M=n m_3,\]
and $c_2=-\frac{m_1}{n m_2}$, $c_3=-\frac{m_1}{n m_3}$. Since $\sigma^E\big|_{\vs^{(\sigma)}}\simeq 5 \sigma$, the basis of $\vs^{(\sigma)}$  constructed above must have ten elements. This basis is formed by the displacements
\[ \Delta_v,\Delta_h,\Delta_v^{(2)},\epsilon_v^{(2)},\Delta_h^{(2)},\epsilon_h^{(2)}, \Delta_v^{(3)},\epsilon_v^{(3)},\Delta_h^{(3)},\epsilon_h^{(3)}, \]
where, for $x \in \mathcal{O}_i$, we have $\epsilon_h^{(i)}(r^jx)=\cos\frac{4 \pi j}{n}e_1+\sin \frac{4 \pi j}{n} e_2$, and $\epsilon_v^{(i)}=J\epsilon_h^{(i)}$, $i=2,3$. We apply Gram-Schmidt to $\Delta_h^{(2)},\Delta_h^{(3)}$, thereby obtaining an $M$-orthogonal pair $\widetilde{\Delta}_h^{(2)},\widetilde{\Delta}_h^{(3)}$. Let $\widetilde{\Delta}_v^{(i)}=J\widetilde{\Delta}_h^{(i)}$, $i=2,3$.  Thus we encounter an $M$-orthogonal basis for $\vs^{(\sigma)}$, namely
\[ \Delta_v,\Delta_h,\widetilde{\Delta}_v^{(2)},\epsilon_v^{(2)},\widetilde{\Delta}_h^{(2)},\epsilon_h^{(2)}, \widetilde{\Delta}_v^{(3)},\epsilon_v^{(3)},\widetilde{\Delta}_h^{(3)},\epsilon_h^{(3)}. \]
Figure~\ref{f2} illustrates the case $n=4$ with masses $m_1=4$, $m_2=1/2$ and $m_3=1$.

\begin{figure}[h]
\begin{center}
\scalebox{.6}{\includegraphics{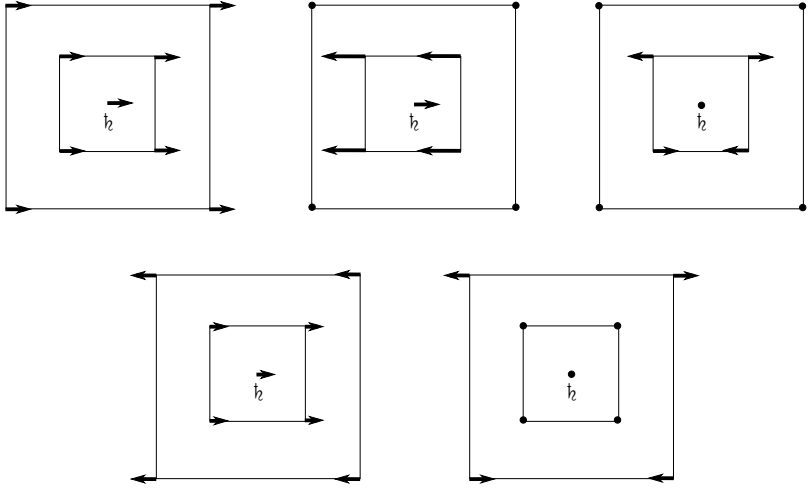}}
\caption{The displacements $\Delta_h,\widetilde{\Delta}_h^{(2)},\epsilon_h^{(2)},\widetilde{\Delta}_h^{(3)},\epsilon_h^{(3)}$ forming an $M$-orthogonal basis of $\vs_1^{(\sigma)}$ for a ring system consisting of a point $\saturn(=O)$ and the vertices of two homothetic squares.}
\label{f2}
\end{center}
\end{figure}

\end{exam}

\section{Full Factorization of the Stability Polynomial of Some $D_2$-Symmetric Sets} \label{sec10}

 In this section we show that the simplest block-diagonal form for operators $A+\kappa J$ can be achieved for $D_2$-symmetric sets with four elements. The key role is played by the refinements of the isotypic decompositions obtained from the usage of the classical symmetries (translations and rotation) and scaling. As a particular consequence, the stability polynomial of the rhombus in the $N$-body and $N$-vortex problems can always be fully factored.
 
 Consider the dihedral group $D_2=\{e,r,s,rs\}$. The table of irreducible representations of $D_2$ is given below. 
\vspace{.2cm}
\begin{center}
\begin{tabular}{c|cccc} \label{D2}
 &  $e$ & $r$ & $s$ & $rs$\\
\hline
$\tau$ & $1$ & 1& $1$ & 1 \\
$\alpha$ & $1$ &1 & $-1$ &$-1$\\
$\phi$ & $1$ & $-1$&  1 &$-1$\\
$\psi$ & 1 & $-1$ & $-1$ & 1
\end{tabular}
\end{center}
\vspace{.2cm}
If $c(g)=\frac{1}{2}(e+g)$, $g \in D_2$, the projections associated with the irreducible representations are:
\[
p^{(\tau)}=c(r)c(s), \quad
p^{(\alpha)} =c(r)c(-s), \quad
p^{(\phi)} =c(-r)c(s), \quad 
p^{(\psi)} =c(-r)c(-s).
\]

 Any $X \subset \reals^2$ with two elements can be viewed as a $D_2$-symmetric set. We view $r$ as a rotation of $\pi$ around the midpoint of $X$, and $s$ as a reflection about the line determined by $X$. The canonical representation $\sigma^E$ on the space of displacements $\sections$ of $X$ is isomorphic to $\rho^{\text{reg}}_{D_2}$, so, according to remark~\ref{regrepequiv}, we have the isotypic decomposition
\[ \sections = \vs^{(\tau)} \oplus \vs^{(\alpha)} \oplus \vs^{(\phi)} \oplus \vs^{(\psi)},\]
where each of the summands on the right-hand side is a one-dimensional subspace. Since each isotypic component is $A$-invariant, each such component is generated by an eigenvector of $A$. The displacements corresponding to the classical symmetries, namely $\varkappa, J \varkappa, \delta_h, \delta_v$, are generators of the isotypic components and thus form a basis of $\sections$. Figure~\ref{f5} illustrates generators of $\vs^{(\tau)}$ and $\vs^{(\phi)}$. As before, generators for $\vs^{(\alpha)}$ and $\vs^{(\psi)}$ can be obtained through the application of $J$.
\begin{figure}[h]
\begin{center}
\scalebox{.5}{\includegraphics{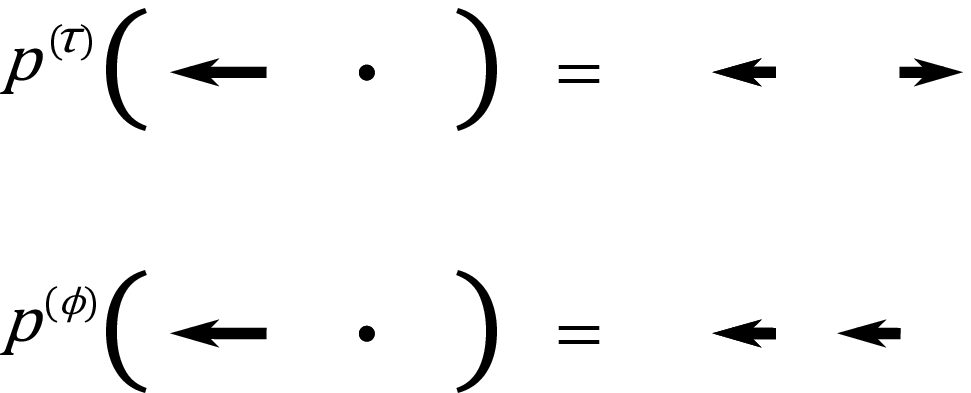}}
\caption{Generators of $\vs^{(\tau)}$ and $\vs^{(\phi)}$ for $X \subset \reals^2$ with two points.}
\label{f5}
\end{center}
\end{figure}

 Now let $X \subset \reals^2$ be an arbitrary $D_2$-symmetric set and let $O$ be the barycenter of $X$. Then $X \setminus \{O\}$ is the union of $b$ 2-gons and $c$ rectangles. A 2-gon consists of a pair of points equidistant from $O$. Two 2-gons in $X$ must be either homothetic or rotated relatively to one another by $\frac{\pi}{2}$. In the case $O \notin X$ and $|X|=N=4$, $X$ may correspond to a collinear configuration of four points, a rhombus or a rectangle. We show that the simplest possible block-diagonal form for $A+\kappa J$ is achieved in each of these cases.
 
 Before beginning our analysis, which will be succint, let us notice that the $D_2$-symmetric sets formed by four collinear points or the vertices of a rhombus consist of the union of two $D_2$-orbits, while the rectangle corresponds to a single $D_2$-orbit. As usual, we assume the masses in each orbit are equal. The following lemma contains a helpful remark.
 
\begin{lemma} \label{redstan} The standard representation $\sigma$ of $D_2$ is not irreducible. More precisely, we have that $\sigma \simeq \phi \oplus \psi$.
\end{lemma}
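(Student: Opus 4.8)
The plan is to compute the matrices $\sigma(r)$ and $\sigma(s)$ for $n=2$ explicitly and then exhibit a change of basis that simultaneously diagonalizes them, identifying the resulting one-dimensional subrepresentations with $\phi$ and $\psi$ from the character table of $D_2$. Setting $n=2$ in the definition of the standard representation gives $\theta = 2\pi/2 = \pi$, so
\[
\sigma(r)=\left[\begin{array}{cc} \cos\pi & -\sin\pi\\ \sin\pi & \cos\pi\end{array}\right]=\left[\begin{array}{cc} -1 & 0\\ 0 & -1\end{array}\right]=-I_2,\qquad \sigma(s)=\left[\begin{array}{cc} -1 & 0\\ 0 & 1\end{array}\right].
\]
Thus $\sigma(r)$ is already $-I_2$, and $\sigma(s)$ is diagonal, so the standard basis vectors $e_1,e_2$ of $\reals^2$ already span $\sigma$-invariant lines.

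First I would observe that $L_1=\text{span}\{e_1\}$ and $L_2=\text{span}\{e_2\}$ are each invariant under both $\sigma(r)=-I_2$ and $\sigma(s)=\text{diag}(-1,1)$, hence $\sigma=\sigma|_{L_1}\oplus\sigma|_{L_2}$ is a decomposition into one-dimensional (therefore irreducible) subrepresentations, which already shows $\sigma$ is reducible. Next I would pin down which irreducibles these are by reading off their values on the generators $r$ and $s$: on $L_1$ one has $r\mapsto -1$, $s\mapsto -1$, hence also $rs\mapsto 1$, which matches the row of $\psi$ in the $D_2$ character table in the excerpt; on $L_2$ one has $r\mapsto -1$, $s\mapsto 1$, hence $rs\mapsto -1$, matching the row of $\phi$. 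Therefore $\sigma\simeq\psi\oplus\phi\simeq\phi\oplus\psi$, as claimed.

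Alternatively, and perhaps more in the spirit of the paper's representation-theoretic toolkit, one could avoid picking an explicit basis and instead apply the projection operators $p^{(\phi)}=c(-r)c(s)$ and $p^{(\psi)}=c(-r)c(-s)$ of $D_2$ directly to $\sigma$: since $\sigma(r)=-I_2$, the operator $c(-r)=\tfrac12(I_2-\sigma(r))=I_2$ acts as the identity on $\reals^2$, so $p^{(\phi)}=c(s)=\tfrac12(I_2+\sigma(s))$ and $p^{(\psi)}=c(-s)=\tfrac12(I_2-\sigma(s))$ are complementary rank-one orthogonal projections summing to $I_2$; their images are the $\phi$- and $\psi$-isotypic components, each one-dimensional, giving the decomposition. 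Either route is essentially a one-line verification.

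There is no real obstacle here; the only thing to be careful about is the bookkeeping of which character is $\phi$ and which is $\psi$ — i.e., matching $\sigma(s)=\text{diag}(-1,1)$ correctly against the table so that the $+1$ eigenvector of $\sigma(s)$ is assigned to $\phi$ (where $s\mapsto 1$) and the $-1$ eigenvector to $\psi$ (where $s\mapsto -1$). Since the final statement asserts $\sigma\simeq\phi\oplus\psi$ (an unordered direct sum), even a swap would not affect the conclusion, so the proof is robust. I would present it in the explicit-basis form for concreteness, since the subsequent use of this lemma in Section~\ref{sec10} refers to specific displacements.
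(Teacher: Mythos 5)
Your proof is correct, and the paper in fact states this lemma without proof, treating it as immediate; your explicit computation that $\sigma(r)=-I_2$ and $\sigma(s)=\mathrm{diag}(-1,1)$ decompose $\reals^2$ into the coordinate lines carrying $\psi$ and $\phi$ is exactly the verification the author leaves to the reader. The character bookkeeping against the $D_2$ table is also correct, so nothing further is needed.
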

 
\subsection{Four collinear points and rhombus} We can directly apply the results of subsections~\ref{dilationrotation} and~\ref{transl}, keeping lemma~\ref{redstan} in mind. Since $\sections$ is eight-dimensional, each of the four two-dimensional isotypic components contains an eigenvector of $A$, and since the bases constructed in~\ref{dilationrotation} and~\ref{transl} are $M$-orthogonal, we have that such bases are formed by displacements corresponding to eigenvectors of $A$. Thus, with respect to the bases constructed in subsections~\ref{dilationrotation} and~\ref{transl}, $A$ is diagonalized and $J$ is in standard form. We conclude that $A+\kappa J$ assumes a block-diagonal form with $2 \times 2$ blocks on the diagonal.

\subsection{Rectangle} All the remarks and conclusions in the previous paragraph apply to rectangles, with the caveat that we must refer to subsection~\ref{sec8} and the translation decomposition described in~\ref{transdecII} for the construction of the appropriate bases. So also for rectangles the operators $A+\kappa J$ assume the simplest block-diagonal form\footnote{A square can be seen alternatively as a $D_2$ or $D_4$-symmetric set, but this seems not to significantly affect the final block diagonalization form of the linearization matrix.}.

\end{document}